\documentclass[11pt,onecolumn]{article}

\usepackage[a4paper,margin=1.15in]{geometry}
\usepackage[T1]{fontenc}
\usepackage{lmodern}
\usepackage{microtype,booktabs,array}
\usepackage[dvipsnames]{xcolor}
\usepackage{amsmath}
\usepackage{amssymb}
\usepackage{amsthm}
\usepackage{aliascnt}
\usepackage{mathtools}
\usepackage{enumitem}
\usepackage{titlesec}
\usepackage{fancyhdr}
\usepackage[pagebackref=true]{hyperref}
\usepackage[nameinlink,noabbrev]{cleveref}

\definecolor{linkblue}{HTML}{1F4E79}
\hypersetup{
    colorlinks=true,
    linkcolor=linkblue,
    citecolor=linkblue,
    urlcolor=linkblue,
    pdfborder={0 0 0}
}

\titleformat{\section}
    {\large\bfseries\color{linkblue}}{\thesection}{0.75em}{}
\titleformat{\subsection}
    {\normalsize\bfseries\color{linkblue}}{\thesubsection}{0.75em}{}
\titlespacing*{\section}{0pt}{2.3ex plus 0.8ex minus 0.2ex}{1.1ex plus 0.2ex}
\titlespacing*{\subsection}{0pt}{1.8ex plus 0.6ex minus 0.2ex}{0.8ex plus 0.2ex}

\setlist[itemize]{leftmargin=2.2em,itemsep=0.25ex,topsep=0.6ex}
\setlist[enumerate]{leftmargin=2.2em,itemsep=0.25ex,topsep=0.6ex}
\renewcommand{\headrulewidth}{0.4pt}
\renewcommand{\headrule}{\hbox to\headwidth{\color{linkblue}\leaders\hrule height \headrulewidth\hfill}}
\fancypagestyle{plain}{%
    \fancyhf{}
    \fancyfoot[C]{\small\color{linkblue}\thepage}
    \renewcommand{\headrulewidth}{0pt}
}

\renewcommand*{\backref}[1]{}

\renewcommand*{\backrefalt}[4]{%
  \ifcase #1\relax
  \or\space #2.%
  \else\space #2.%
  \fi
}

\newtheoremstyle{blueplain}
    {0.9ex}{0.9ex}{\itshape}{}%
    {\bfseries\color{linkblue}}{.}{0.55em}{}
\newtheoremstyle{bluedefinition}
    {0.9ex}{0.9ex}{\normalfont}{}%
    {\bfseries\color{linkblue}}{.}{0.55em}{}
\theoremstyle{blueplain}
\newtheorem{theorem}{Theorem}[section]
\newaliascnt{proposition}{theorem}
\newtheorem{proposition}[proposition]{Proposition}
\aliascntresetthe{proposition}
\newaliascnt{lemma}{theorem}
\newtheorem{lemma}[lemma]{Lemma}
\aliascntresetthe{lemma}
\newaliascnt{corollary}{theorem}
\newtheorem{corollary}[corollary]{Corollary}
\aliascntresetthe{corollary}
\newaliascnt{conjecture}{theorem}

\aliascntresetthe{conjecture}
\theoremstyle{bluedefinition}
\newaliascnt{definition}{theorem}
\newtheorem{definition}[definition]{Definition}
\aliascntresetthe{definition}
\newaliascnt{example}{theorem}
\newtheorem{example}[example]{Example}
\aliascntresetthe{example}
\theoremstyle{bluedefinition}
\newaliascnt{remark}{theorem}
\newtheorem{remark}[remark]{Remark}
\aliascntresetthe{remark}

\newcommand{\R}{\mathbb R}
\newcommand{\C}{\mathbb C}
\newcommand{\M}{\mathrm{Mat}}
\newcommand{\Sym}{\operatorname{Sym}}
\newcommand{\tr}{\operatorname{tr}}
\newcommand{\Tr}{\operatorname{Tr}}

\title{Clifford Realizations of Hyperbolic Cubics}
\author{\textsc{Tim Netzer}\\[0.25em]
  {\small Department of Mathematics, University of Innsbruck}}
\date{September 4, 2026}

\makeatletter
\renewcommand{\maketitle}{%
    \thispagestyle{plain}%
    \begin{center}
        \vspace*{-2.6em}
        {\LARGE\bfseries\color{linkblue}\@title\par}
        \vspace{1.25em}
        {\large\@author\par}
        \vspace{0.7em}
        {\small\color{gray}\@date\par}
    \end{center}
    \vspace{1.1em}
}
\makeatother

\renewenvironment{abstract}{%
    \begin{center}
    \begin{minipage}{0.9\textwidth}
    \small
    \begin{center}
        {\bfseries\color{linkblue}\abstractname}
    \end{center}
    \vspace{-0.4em}
}{%
    \end{minipage}
    \end{center}
    \vspace{0.8em}
}

\begin{document}
\maketitle

\begin{abstract}
We prove the generalized Lax conjecture for cubics in five variables:
their hyperbolicity cones are spectrahedral.  Our approach constructs
spectrahedral realizations from completely positive maps on Clifford
algebras.  The construction passes through quaternionic determinantal
representations of certain extremal cubics.  Convexity of the set of Clifford-realizable cubics
then yields the result for all cubics in five variables.
The approach also covers all previously known cubic cases of
the generalized Lax conjecture: forms in at most four variables and
symmetric forms in arbitrarily many variables.
We further establish a sufficient criterion for Clifford realizability
in arbitrary dimension and use it to obtain a neighborhood of
the origin in the cubic norm ball consisting of Clifford-realizable
cubics.  Finally, we prove spectrahedrality for sparse hyperbolic
quartics in four variables.
\end{abstract}

\noindent\textbf{2020 Mathematics Subject Classification.}
Primary 14P10; Secondary 15A66, 46L07, 90C22.

\noindent\textbf{Keywords.}
Hyperbolic polynomials, generalized Lax conjecture, spectrahedral cones,
Clifford algebras, completely positive maps, cubic forms.

\section*{Introduction}
\markboth{Introduction}{}

Hyperbolic polynomials give rise to convex cones and hence to a class of
convex optimization problems.  A basic example is the determinant on the
space of complex Hermitian matrices: its hyperbolicity cone
is the positive semidefinite cone.  More generally, if
$L(y)=\sum_{j=1}^n y_jA_j$ is a complex Hermitian  linear pencil with
$L(e)\succ0$, then $\det L$ is hyperbolic with respect to $e$ and
\[
 \Lambda_+(\det L,e)=\{y\in\R^n\mid L(y)\geqslant0\}.
\]
Cones admitting such a description are called \emph{spectrahedral}. 
The generalized Lax conjecture asks whether the converse holds: is every
hyperbolicity cone spectrahedral?  
For general background on spectrahedra, spectrahedral shadows, and convex
algebraic geometry, see~\cite{BlekhermanParriloThomas, NetzerPlaumann}.

It is important that the conjecture concerns the cone, not a particular
polynomial defining it.  Requiring the polynomial itself, or even a
positive power of it, to have a definite determinantal representation
is stronger and fails in general, already for cubics in sufficiently
many variables~\cite{Saunderson}.  There are nevertheless substantial
positive results.  In three variables every hyperbolic polynomial has
a definite determinantal representation~\cite{HeltonVinnikov}.
Quadratic hyperbolicity cones are spectrahedral in any number of
variables~\cite{NetzerThomDeterminantal}.  Other established families include
elementary symmetric polynomials~\cite{BrandenElementary}, derivative
relaxations of the positive semidefinite cone~\cite{KummerSpectral},
and the multivariate matching polynomials of~\cite{Amini}.
A weaker conclusion holds under a smoothness assumption: if every
nonzero boundary point of the cone is a smooth point of the defining
polynomial, then the cone is a spectrahedral shadow, that is, a linear
image of a spectrahedral cone~\cite{NetzerSanyal}.
This has been strengthened to second-order cone representability under
the weaker assumption of Nash-smooth boundary~\cite{ScheidererSOC}.

For cubics, spectrahedrality in four variables follows from definite
determinantal representations of smooth cubic surfaces and a limiting
argument~\cite{BuckleyKosir,Saunderson}.  It is also known for symmetric
hyperbolic cubics in an arbitrary number of variables
\cite{BlekhermanLindbergShu}.
The main result of this paper concerns five variables, without
smoothness or symmetry assumptions:
\emph{Every hyperbolicity cone of a homogeneous cubic in five variables
is spectrahedral.}

We prove this in \Cref{thm:R4H4} through a realization result for normalized
cubics.  After a linear change of variables and removal of directions
on which the polynomial is constant, a hyperbolic cubic can be written as
\[
 h_q(t,x)=t^3-3\|x\|^2t+2q(x),\qquad \|q\|_\infty\leqslant1,
\]
where $\|q\|_\infty$ is the supremum of $|q|$ on the unit sphere.
Write $H_m$ for this norm ball of cubics on $\R^m$.
We call $q$ Clifford realizable if there is a unital completely positive
map $\Theta\colon C_m\to\M_m(\C)$ satisfying
\[
 q(x)=x^*\Theta(c(x))x,
\]
where $C_m$ is the universal Clifford algebra with generators $c_i$, and
$c(x)=\sum_i x_ic_i$.  Denote the set of such cubics by $R_m$. Every Clifford realization gives an explicit Hermitian linear
pencil for the hyperbolicity cone of $h_q$, of size bounded in terms of $m$ alone
(\Cref{t:ucp}). 
Our central assertion is
\[
 R_4=H_4.
\]
We first show that cubics admitting five suitably independent
sphere contact points are dense among the extreme points of $H_4$.
For these cubics we construct a definite quaternionic Hermitian
$3\times3$ determinantal representation, which   we then  convert into a Clifford
realization.  Finally, compactness and convexity of $R_4$ extend the
construction to all of $H_4$.

The realization method also directly gives $R_3=H_3$ (\Cref{c:lowdim}), Clifford realizations for
symmetric hyperbolic cubics (\Cref{t:symmetric}), and a matrix criterion implying
$m^{-1/2}H_m\subseteq R_m$ (\Cref{t:uniform}, \Cref{c:canonical-row}). We also obtain spectrahedrality for hyperbolic quartics in four variables of the form
$t^4-6\|x\|^2t^2+a(x)$ (\Cref{t:quartic}).

However, the scope of our method is not unlimited: we prove that $H_m$ is not a spectrahedral
shadow for $m\geqslant5$, and consequently $R_m\subsetneq H_m$
(\Cref{t:strict}).  This is an obstruction to Clifford realizability,
not a counterexample to the generalized Lax conjecture.

\section{Setup}

\subsection{Normal form and convexity}
A homogeneous polynomial $h$ is hyperbolic with respect to $e$
if $h(e)>0$ and $s\mapsto h(v+se)$ has only real roots for every real $v$.
The closed hyperbolicity cone $\Lambda_+(h,e)$ is the closure of the
component of $\{h>0\}$ containing $e$.  Its convexity and the invariance of
hyperbolicity under moving $e$ within that component were proved
in~\cite{Garding}.

Fix an integer $m\geqslant2$, and let $P_3(\R^m)$ denote the vector space of
homogeneous cubic forms on $\R^m$.  For $q\in P_3(\R^m)$, define
\[
\|q\|_\infty:=\sup_{\|x\|=1}|q(x)|
\] and set
\[
h_q(t,x):=t^3-3\|x\|^2t+2q(x).
\]
The norm criterion for $h_q$ in the following proposition appears already 
in~\cite{Saunderson}.
We include a proof and the reduction from a general hyperbolic cubic to this normal form.

\begin{proposition}\label{p:normal}
Let $h$ be a homogeneous cubic with $h(e)=1$, hyperbolic with respect to $e$.
After sending $e$ to $(1,0,\ldots,0)$ by a linear change of variables and
quotienting the transverse directions on which $h$ is constant, there is
$q\in P_3(\R^m)$ such that
\begin{equation*}
h(t,x)=t^3-3\|x\|^2t+2q(x).
\end{equation*}
with
\[
\|q\|_\infty\leqslant1.
\]
Conversely, for every $q\in P_3(\R^m)$, the polynomial $h_q$ is
hyperbolic with respect to $(1,0,\ldots,0)$ if and only if
\[
\|q\|_\infty\leqslant1.
\]
\end{proposition}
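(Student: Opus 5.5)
The proof has two parts: the reduction to normal form, and the norm criterion.

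\emph{Reduction.} After a linear change of variables, assume $e=(1,0,\ldots,0)$, and write
\[
h(t,y)=t^3+a_1(y)t^2+a_2(y)t+a_3(y)
\]
with $a_i$ homogeneous of degree $i$. The substitution $t\mapsto t-a_1(y)/3$ is linear and fixes $e$, and it removes the $t^2$ term. Hyperbolicity says that $s\mapsto h(s,y)$ has only real roots $\lambda_1,\lambda_2,\lambda_3$, and now $\sum_i\lambda_i=0$. Then
\[
a_2(y)=\sum_{i<j}\lambda_i\lambda_j=-\tfrac12\sum_i\lambda_i^2\leqslant0,
\]
so $-a_2$ is a positive semidefinite quadratic form. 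If $-a_2(y)=0$, all three roots vanish, so $h(t,y)=t^3$. Moreover $h(t,y+z)=h(t,y)$ for all $z$ in the kernel $K$ of the form $-a_2$. To see this, note that $h(e+s y+ s' z)$ along any line in the span still has real roots of zero sum, so $-a_2$ restricted to that span is the sum of squares of the eigenvalue functions. Hence the eigenvalues of $y+z$ are those of $y$ when $z\in K$, and the coefficients $a_2,a_3$ are unchanged. So we may quotient by $K$ and choose coordinates $x$ on the quotient with $-a_2(y)=3\|x\|^2$. Setting $2q:=a_3$ gives the normal form, and the norm bound then follows from the converse direction.

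\emph{Criterion.} For fixed $x$ with $\|x\|=1$, the polynomial $h_q(s,x)=s^3-3s+2q(x)$ is the univariate cubic $p(s)=s^3-3s+c$ with $c=2q(x)$. Its critical points are $s=\pm1$, with values $p(1)=c-2$ and $p(-1)=c+2$. The cubic has three real roots, counted with multiplicity, if and only if $p(-1)\geqslant0\geqslant p(1)$, that is $|c|\leqslant2$, which is $|q(x)|\leqslant1$. Equivalently, the discriminant $4\cdot 27-27c^2$ is nonnegative.

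By homogeneity, $h_q(s,x)=\|x\|^3\,h_q(s/\|x\|,x/\|x\|)$, so for $x\neq0$ real-rootedness along the line through $x$ reduces to the unit-sphere case. For $x=0$ we get $s^3$, which is trivially real-rooted. Since $h_q(e)=1>0$, $h_q$ is hyperbolic with respect to $e$ exactly when $\sup_{\|x\|=1}|q(x)|\leqslant1$. The $t$-variable directions are covered as well, because $h_q(s+t,x)$ is just a translate in $s$.

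\emph{Main obstacle.} The delicate step is showing that $h$ is genuinely constant along $\ker(-a_2)$, not merely that its eigenvalues vanish on $K$. I would argue via the linearity of the hyperbolicity cone's lineality space: the lineality space of $\Lambda_+(h,e)$ equals the set of $y$ with all eigenvalues zero, and translation-invariance of $h$ along it is a standard consequence of G\aa rding's theory.
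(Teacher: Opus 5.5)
Your criterion part is correct and is essentially the paper's argument: the paper checks the discriminant $108(\|x\|^6-q(x)^2)$, and you check the signs of the critical values $p(\pm1)$. The reduction, however, contains a step that fails as written. For $z\in K$ you do get $a_2(y+z)=a_2(y)$. But $a_2$ only records $\sum_i\lambda_i^2$, and together with $\sum_i\lambda_i=0$ this leaves the product $\lambda_1\lambda_2\lambda_3=-a_3$ free. So ``hence the eigenvalues of $y+z$ are those of $y$'' is a non sequitur and gives no information about $a_3(y+z)$.

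You flag this yourself, and the repair you propose is legitimate. For cubics it is even elementary. Convexity of $\Lambda_+(h,e)$ makes $\lambda_{\max}$ subadditive and $\lambda_{\min}$ superadditive, so for $z$ with $\lambda(z)=0$ one gets $\lambda_{\max}(y+z)\leqslant\lambda_{\max}(y)\leqslant\lambda_{\max}(y+z)+\lambda_{\max}(-z)=\lambda_{\max}(y+z)$. The same holds for $\lambda_{\min}$, and the vanishing trace then fixes the middle root.

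You do not need G\aa rding's theory here, though. The paper gets the invariance from the same inequality you prove in your second part, applied before normalizing. Real-rootedness of $t^3+a_2(y)t+a_3(y)$ is equivalent to $27a_3(y)^2\leqslant-4a_2(y)^3$. For $z\in K$ one has $a_2(y+sz)=a_2(y)$ for all $s\in\R$, since $-a_2$ is positive semidefinite and so its zero set is its radical. Hence the polynomial $s\mapsto a_3(y+sz)$ is bounded on $\R$ and therefore constant. The cleaner order is to derive the discriminant bound for the unnormalized cubic first. Then use it twice: to pass to the quotient by $K$, and, after rescaling $-a_2$ to $3\|x\|^2$, to obtain $\|q\|_\infty\leqslant1$.
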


\begin{proof}
Eliminate the quadratic term in $t$ and write
\[
h(t,x)=t^3-3b(x)t+2q(x).
\]
The discriminant condition gives $b\geqslant0$ and
\[
|q(x)|\leqslant b(x)^{3/2}.
\]
If $u\in\ker b$, then $b(x+su)=b(x)$, so the polynomial
$s\mapsto q(x+su)$ is bounded and therefore constant. Thus $h$ descends to
$\R^m/\ker b$, where $b$ is positive definite and can be changed to
$\|x\|^2$.
Conversely, the discriminant of $t\mapsto h_q(t,x)$ is
\[
108\bigl(\|x\|^6-q(x)^2\bigr).
\]
It has three real roots for every $x$ exactly when $|q(x)|\leqslant\|x\|^3$.
\end{proof}

\begin{corollary}\label{p:Hm}
The set
\[
H_m:=\{q\in P_3(\R^m)\mid\|q\|_\infty\leqslant1\}
\]
is compact and convex. It is exactly the set of cubics $q$ for which $h_q$ is
hyperbolic with respect to $e=(1,0)$. For $q\in\mathring H_m$, the roots of $h_q(\,\cdot\,,x)$ are distinct whenever
$x\ne0$; on $\partial H_m$, two coincide exactly in a direction satisfying
$|q(x)|=\|x\|^3$.
\end{corollary}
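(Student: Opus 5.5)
The corollary packages \Cref{p:normal} with elementary facts about the sup-norm, and I will prove it in three short steps.

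\emph{Compactness and convexity.} Since $P_3(\R^m)$ is finite-dimensional and $\|\cdot\|_\infty$ is a norm on it, $H_m$ is the closed unit ball of a norm, hence compact and convex. The seminorm properties (homogeneity and the triangle inequality) are immediate from the definition as a supremum of $|q(x)|$. Definiteness holds because a cubic form vanishing on the unit sphere vanishes everywhere by homogeneity.

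\emph{Characterization by hyperbolicity.} This is exactly the converse part of \Cref{p:normal}: $h_q$ is hyperbolic with respect to $(1,0,\ldots,0)$ if and only if $\|q\|_\infty\leqslant1$.

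\emph{Root multiplicities.} For fixed $x\neq0$, the polynomial $t\mapsto h_q(t,x)$ is a monic cubic with discriminant $108(\|x\|^6-q(x)^2)$, as computed in \Cref{p:normal}. It has a repeated root exactly when this vanishes, i.e.\ when $|q(x)|=\|x\|^3$. Equivalently, for the unit vector $u=x/\|x\|$ we have $|q(u)|=1$ by homogeneity. Two things then need to be shown.
\begin{itemize}
\item If $q\in\mathring H_m$, then $\|q\|_\infty<1$. The interior of a norm ball is the open ball, since $(1+\varepsilon)q$ must stay in $H_m$ for small $\varepsilon>0$. Hence $|q(u)|<1$ for every unit $u$, so the roots are distinct for all $x\neq0$.
\item If $q\in\partial H_m$, then $\|q\|_\infty=1$. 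By compactness of the sphere the supremum is attained at some unit $u$, so a direction with $|q(x)|=\|x\|^3$ exists. By the discriminant criterion, coincidence of roots occurs precisely in such directions.
\end{itemize}
Finally, if $|q(x)|=\|x\|^3$ the discriminant vanishes but the roots are not all equal. Since the $t^2$ coefficient is zero, the roots sum to zero, so a triple root would have to be $0$. That would force $\|x\|^2=0$, which is impossible. Hence exactly two roots coincide, which justifies ``two coincide'' in the statement.

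None of the steps is a real obstacle. The only points needing care are identifying the interior and boundary of the norm ball with the conditions $\|q\|_\infty<1$ and $\|q\|_\infty=1$, and using compactness of the sphere to get attainment of the supremum.
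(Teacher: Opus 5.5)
Your proposal is correct and follows the paper's argument: $H_m$ is the closed unit ball of the norm $\|\cdot\|_\infty$, the hyperbolicity characterization is the converse part of \Cref{p:normal}, and the root statements come from the sign of the discriminant $108(\|x\|^6-q(x)^2)$. You also make explicit some details the paper leaves implicit: that $\mathring H_m$ and $\partial H_m$ are given by $\|q\|_\infty<1$ and $\|q\|_\infty=1$, that the supremum is attained on the sphere, and that a triple root cannot occur for $x\neq0$.
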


\begin{proof}
$H_m$ is the closed unit ball of the norm $\|\cdot\|_\infty$, so it is compact
and convex, and the hyperbolicity assertion is \Cref{p:normal}. The discriminant
\[
108\bigl(\|x\|^6-q(x)^2\bigr).
\]
is positive for $x\ne0$ when $q\in\mathring H_m$, and for
$q\in\partial H_m$ it vanishes exactly in the stated directions.
\end{proof}

The same norm balls in the two- and three-dimensional
spaces and their extreme points are studied in~\cite{HildebrandIvanova}, 
a semidefinite representation of $H_3$ is given
in~\cite{Hildebrand}.
These are precedents for both the convex-body viewpoint and the
contact-point arguments used later.

\subsection{Canonical tensor and positive maps}

We now encode the cubic by a symmetric matrix pencil, turning the
norm bound into a positivity condition for a linear map.

For $q\in P_3(\R^m)$ let $B_q$ be the symmetric trilinear form associated with $q$, so that
$$q(x)=B_q(x,x,x),$$ and define the canonical symmetric pencil $T_q$ by
\[
u^TT_q(x)v=B_q(x,u,v).
\]
Equivalently,
\[
T_q(x)=\frac16\nabla^2q(x),\qquad x^TT_q(x)x=q(x).
\]

\begin{proposition}\label{p:contract}
For every homogeneous cubic $q$ and every $x\in\R^m$,
\begin{equation}\label{eq:tensor-bound}
\|T_q(x)\|\leqslant\|q\|_\infty\|x\|.
\end{equation}
In particular, $h_q$ is hyperbolic if and only if
\begin{equation}\label{eq:contractive-pencil}
\|T_q(x)\|\leqslant\|x\|\qquad(x\in\R^m).
\end{equation}
\end{proposition}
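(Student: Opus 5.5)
The inequality \eqref{eq:tensor-bound} is a statement about a symmetric trilinear form. By homogeneity it says
\[
|B_q(x,u,v)|\leqslant\|q\|_\infty\qquad\text{whenever }\|x\|=\|u\|=\|v\|=1.
\]
Here $\|T_q(x)\|$ is the operator norm of a real symmetric matrix, so it equals $\sup_{\|u\|=\|v\|=1}|u^TT_q(x)v|$, and this supremum may even be taken over $u=v$. So the plan is to invoke the classical polarization theorem of Banach (also due to Kellogg): for a symmetric multilinear form on a real Hilbert space, the norm of the form equals the norm of its diagonal restriction. Applied to $B_q$, whose diagonal is $q$, this gives exactly the claimed bound.

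Since the paper aims to be self-contained, I would also sketch a direct argument for the trilinear case. Let $M=\sup_{\|x\|=\|u\|=1}|B_q(x,u,u)|$. This equals the supremum of $|B_q(x,u,v)|$ over unit vectors, because each $T_q(x)$ is symmetric. Next, choose a maximizing pair $(x,u)$; one exists by compactness. The key step is to show that one may take $x=\pm u$. Consider the bilinear form $(y,z)\mapsto B_q(y,z,\cdot)$ paired against a fixed vector. Fix $u$ and look at $x\mapsto B_q(x,u,u)=\langle x,g\rangle$ with $g=T_q(u)u$; this is maximized at $x=g/\|g\|$. Now fix $x$ and maximize over $u$: the optimal $u$ is an eigenvector of $T_q(x)$, so $T_q(x)u=\lambda u$. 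Symmetry of $B_q$ gives $\langle T_q(u)u,x\rangle=\langle T_q(x)u,u\rangle$, and the optimality conditions then give $T_q(u)x=\lambda u$ together with $T_q(u)u=\lambda x$.

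The standard trick is to pass to $w=(x+u)/\|x+u\|$ (or $(x-u)/\|x-u\|$) and expand $q(x\pm u)$ using trilinearity. Writing $q(x+u)+q(x-u)$ and $q(x+u)-q(x-u)$ in terms of $B_q(x,x,x)$, $B_q(x,u,u)$ and the remaining terms, and using the stationarity relations, one shows $|q(w)|\geqslant M$ for a suitable sign. The main obstacle is this final bookkeeping. If it becomes messy, I would instead cite Banach's theorem, or use the two-dimensional reduction: restrict to $\mathrm{span}(x,u)$, where a binary cubic is determined by Chebyshev-type extremality, and check directly that $|B(x,u,u)|\leqslant\sup|q|$ there.

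For the ``in particular'' part, \Cref{p:normal} says that $h_q$ is hyperbolic iff $\|q\|_\infty\leqslant1$. If this holds, \eqref{eq:tensor-bound} gives \eqref{eq:contractive-pencil}. Conversely, if \eqref{eq:contractive-pencil} holds, then
\[
|q(x)|=|x^TT_q(x)x|\leqslant\|T_q(x)\|\,\|x\|^2\leqslant\|x\|^3,
\]
so $\|q\|_\infty\leqslant1$.
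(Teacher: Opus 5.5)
Your main route is the paper's: cite Banach's theorem that a symmetric trilinear form on a real Hilbert space attains its norm on the diagonal, then get the converse from $|q(x)|=|x^TT_q(x)x|\leqslant\|T_q(x)\|\,\|x\|^2$ and \Cref{p:normal}. Your optional direct sketch, however, fails as written, because one bisection step does not reach the diagonal: for $q=x_1^3-3x_1x_2^2$ the pair $x=-e_1$, $u=e_2$ satisfies all your stationarity relations with $\lambda=M=1$, yet $q\bigl((x\pm u)/\sqrt2\bigr)=1/\sqrt2$; what does work is to note that $T_q(u)(x\pm u)=\pm\lambda(x\pm u)$, so $(u,w,w)$ with $w=(x\pm u)/\|x\pm u\|$ is a new maximizer whose two vectors subtend half the previous angle, then iterate and pass to a limit by compactness.
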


\begin{proof}
By the symmetric multilinear norm theorem for real Hilbert
spaces~\cite{Banach}, the multilinear spectral norm of $B_q$ is attained
on a symmetric rank-one tensor; see also~\cite{Friedland} for
the tensor formulation.  Thus
\[
\sup_{\|x_1\|=\|x_2\|=\|x_3\|=1}|B_q(x_1,x_2,x_3)|
=
\sup_{\|x\|=1}|B_q(x,x,x)|
=\|q\|_\infty.
\]
By multilinearity,
\[
|B_q(x,u,v)|
\leqslant\|q\|_\infty\|x\|\|u\|\|v\|.
\]
Taking the supremum over unit $u,v$ proves \eqref{eq:tensor-bound}. The
equivalence in \eqref{eq:contractive-pencil}
follows from Proposition~\ref{p:normal} and $q(x)=x^TT_q(x)x$.
\end{proof}

Choose Hermitian matrices $\gamma_1,\ldots,\gamma_m\in \M_N(\C)$ satisfying
\[
\gamma_i\gamma_j+\gamma_j\gamma_i=2\delta_{ij}I_N,
\]
and put $\gamma(x)=\sum_i x_i\gamma_i$. Define the operator system
\[
S_m:=\operatorname{span}_\C\{I_N,\gamma_1,\ldots,\gamma_m\}.
\]
This is the complex operator system associated with a spin factor;
for spin factors and their positive trace projections, see~\cite{Stormer}.
Since $\gamma(x)^2=\|x\|^2I_N$,
\begin{equation}\label{eq:spin-cone}
aI_N+\gamma(x)\geqslant0
\Longleftrightarrow
a\geqslant\|x\|.
\end{equation}
Write $T_q(x)=\sum_i x_iT_i$ and define the unital $*$-linear map
\[
\phi_q\colon S_m\to \M_m(\C),\qquad
\phi_q(I_N)=I_m,\quad \phi_q(\gamma_i)=T_i.
\]

\begin{proposition}\label{p:spinpositive}
The polynomial $h_q$ is hyperbolic if and only if $\phi_q$ is positive.
\end{proposition}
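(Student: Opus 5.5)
We prove the equivalence by showing that both sides amount to the contractivity condition \eqref{eq:contractive-pencil}, which by \Cref{p:contract} characterizes hyperbolicity of $h_q$. The plan is to describe the positive cone of $S_m$ explicitly, and then check positivity of $\phi_q$ on that cone.

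\emph{Step 1: the positive elements of $S_m$.} A general element of $S_m$ is $aI_N+\sum_i z_i\gamma_i$ with $a\in\C$ and $z\in\C^m$. Such an element is Hermitian exactly when $a$ and all $z_i$ are real, because $I_N,\gamma_1,\ldots,\gamma_m$ are Hermitian and linearly independent. Linear independence follows from the anticommutation relations: multiply a vanishing combination by $\gamma_j$ and take the trace, using $\tr(\gamma_j)=0$ for $m\geqslant2$. Hence the positive elements of $S_m$ are exactly $aI_N+\gamma(x)$ with $a\in\R$, $x\in\R^m$ and, by \eqref{eq:spin-cone}, $a\geqslant\|x\|$.

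\emph{Step 2: positivity of $\phi_q$ on this cone.} Since $\phi_q(aI_N+\gamma(x))=aI_m+T_q(x)$, the map $\phi_q$ is positive if and only if
\[
aI_m+T_q(x)\geqslant0\quad\text{whenever } a\geqslant\|x\|.
\]
Because increasing $a$ preserves positivity, it suffices to take $a=\|x\|$. The matrix $T_q(x)$ is real symmetric, so $\|x\|I_m+T_q(x)\geqslant0$ says exactly that $\lambda_{\min}(T_q(x))\geqslant-\|x\|$. Replacing $x$ by $-x$ and using $T_q(-x)=-T_q(x)$ gives also $\lambda_{\max}(T_q(x))\leqslant\|x\|$. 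Together these give $\|T_q(x)\|\leqslant\|x\|$ for all $x$. Conversely, that norm bound implies $\|x\|I_m+T_q(x)\geqslant0$, so positivity of $\phi_q$ is equivalent to \eqref{eq:contractive-pencil}.

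\emph{Step 3.} By \Cref{p:contract}, \eqref{eq:contractive-pencil} is equivalent to hyperbolicity of $h_q$, which proves the proposition.

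The only point needing care is Step 1: showing that $\phi_q$ is well defined, i.e.\ that the spanning set is linearly independent, and correctly identifying the Hermitian part of $S_m$. The rest is a direct check.
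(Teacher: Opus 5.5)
Your proof is correct and follows the paper's argument: both use \eqref{eq:spin-cone}, take $a=\|x\|$ and replace $x$ by $-x$ to turn positivity of $\phi_q$ into $-\|x\|I_m\leqslant T_q(x)\leqslant\|x\|I_m$, and then invoke \Cref{p:contract}. Your Step 1 also checks that $\phi_q$ is well defined and identifies the Hermitian part of $S_m$, details the paper leaves implicit.
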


\begin{proof}
By \eqref{eq:spin-cone}, positivity is equivalent, after taking
$a=\|x\|$ and replacing $x$ by $-x$, to
\[
-\|x\|I_m\leqslant T_q(x)\leqslant\|x\|I_m.
\]
Now apply Proposition~\ref{p:contract}.
\end{proof}

\begin{remark}
Every positive map $\phi\colon S_m\to \M_r(\C)$ has a positive extension to $\M_N(\C)$.
The construction uses the standard positive trace projection onto a spin
factor~\cite{Stormer}; we recall its elementary proof here.
For $\tau=N^{-1}\Tr$ define
\[
P(X):=\tau(X)I_N+\sum_{i=1}^m\tau(\gamma_iX)\gamma_i.
\]
The identities $\tau(\gamma_i)=0$ and
$\tau(\gamma_i\gamma_j)=\delta_{ij}$ show that $P$ fixes $S_m$. If $X\geqslant0$,
write $a=\tau(X)$ and $x_i=\tau(\gamma_iX)$. Then
$|\langle u,x\rangle|=|\tau(\gamma(u)X)|\leqslant a$ for every unit $u$, so
$\|x\|\leqslant a$ and \eqref{eq:spin-cone} gives $P(X)\geqslant0$.
Thus $\phi\circ P$ is the required extension.
\end{remark}

\section{Clifford realizations}\label{s:clifford}

\subsection{Clifford realizations}

Let $C_m$ be the universal finite-dimensional complex $C^*$-algebra generated by
self-adjoint elements $c_1,\ldots,c_m$ satisfying
\[
c_ic_j+c_jc_i=2\delta_{ij}I,\qquad
c(x):=\sum_{i=1}^m x_ic_i.
\]

\begin{definition}\label{d:DCl}
A \emph{Clifford realization} of a cubic $q\in P_3(\mathbb R^m)$ is a unital
completely positive (ucp) map
\[
\Theta\colon C_m\to \M_m(\C)
\]
such that
\[
q(x)=x^*\Theta(c(x))x.
\]
We call $q$ \emph{Clifford realizable} if such a map exists, and write
$R_m$ for the set of all Clifford-realizable cubics.
\end{definition}
In matrix-convex terminology, the tuples $(\Theta(c_1),\ldots,\Theta(c_m))$
form the level-$m$ matrix range of the universal Clifford tuple;
see~\cite{PasserShalitSolel}.  The set $R_m$ is its image
under the linear map sending a tuple to $x^*\Theta(c(x))x$.

\begin{theorem}
\label{t:ucp}\label{p:common}
For every $q\in R_m$ there is a Hermitian linear pencil
$L_q(t,x)$ of size at most $m2^m+1$ satisfying
\[
\Lambda_+(h_q,e)=\{(t,x)\mid L_q(t,x)\geqslant0\}.
\]
In particular, every Clifford realizable cubic has a spectrahedral
hyperbolicity cone.
\end{theorem}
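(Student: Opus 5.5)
The plan is to dilate the realization and then read off the pencil by a Schur complement. Let $\Theta\colon C_m\to\M_m(\C)$ be a Clifford realization of $q$. Since $C_m$ is a finite-dimensional $C^*$-algebra of dimension $2^m$, Stinespring's theorem gives a representation $\pi\colon C_m\to\M_K(\C)$ and an isometry $V\colon\C^m\to\C^K$ with $\Theta(a)=V^*\pi(a)V$. The minimal dilation space is spanned by $\pi(C_m)V\C^m$, so $K\leqslant m\cdot 2^m$.

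Put $\Gamma(x):=\pi(c(x))$. Then $\Gamma(x)$ is Hermitian with $\Gamma(x)^2=\|x\|^2I_K$, and
\[
x^*V^*\Gamma(x)Vx=q(x),\qquad V^*V=I_m .
\]
I would take the $(K+1)\times(K+1)$ pencil
\[
L_q(t,x)=\begin{pmatrix} tI_K-\Gamma(x) & \sqrt2\,Vx\\ \sqrt2\,x^*V^* & t\end{pmatrix},
\]
which is linear in $(t,x)$ and Hermitian.

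First compute the Schur complement for $t>\|x\|$. In this range $A:=tI_K-\Gamma(x)\succ0$, and
\[
A^{-1}=\frac{tI_K+\Gamma(x)}{t^2-\|x\|^2}.
\]
Hence $L_q(t,x)\geqslant0$ if and only if
\[
t-2\,\frac{t\|x\|^2+q(x)}{t^2-\|x\|^2}\geqslant0 .
\]
Multiplying by $t^2-\|x\|^2>0$, this is equivalent to
\[
t^3-3t\|x\|^2-2q(x)=h_q(t,-x)\geqslant0 .
\]
The sign mismatch is only cosmetic: replace $\Gamma(x)$ by $-\Gamma(x)$, equivalently compose $\pi$ with the automorphism $c_i\mapsto-c_i$. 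This yields exactly $h_q(t,x)\geqslant0$ on the region $t>\|x\|$. In general, the coefficients $a$ on $\Gamma$ and $b$ on $Vx$ must satisfy $a^2+b^2=3$ and $ab^2=2$, and $a=1$, $b=\sqrt2$ is the choice that does this.

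Second, identify the hyperbolicity cone. A point $(t,x)$ lies in $\Lambda_+(h_q,e)$ if and only if $t\geqslant\lambda_{\max}(x)$, the largest root of $h_q(\cdot,x)$. Write $r=\|x\|$. Since $q\in R_m\subseteq H_m$ (by $\|\Theta\|\leqslant1$, or simply because we are given a realization), we have $h_q(r,x)=2(q(x)-r^3)\leqslant0$. Moreover $\partial_th_q=3(t^2-r^2)>0$ for $t>r$. So $h_q(\cdot,x)$ has exactly one root in $[r,\infty)$, namely $\lambda_{\max}(x)$, and
\[
\{t>r,\ h_q(t,x)\geqslant0\}=\{t>r,\ t\geqslant\lambda_{\max}(x)\}.
\]
Combined with the first step, $L_q$ and $\Lambda_+$ agree on the open set $\{t>\|x\|\}$.

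The main obstacle is the boundary locus $t\leqslant\|x\|$, which I would handle by a closure argument. Suppose $L_q(t,x)\geqslant0$. Then $L_q(t+\varepsilon,x)=L_q(t,x)+\varepsilon I\succ0$. The diagonal block then gives $t+\varepsilon>\|x\|$, because the eigenvalues of $\Gamma(x)$ are $\pm\|x\|$ when $x\ne0$. By the first two steps, $(t+\varepsilon,x)\in\Lambda_+$, and letting $\varepsilon\to0$ with $\Lambda_+$ closed gives $(t,x)\in\Lambda_+$. Conversely, suppose $(t,x)\in\Lambda_+$. Then $t\geqslant\lambda_{\max}\geqslant\|x\|$ by the second step, and $(t+\varepsilon,x)$ lies in $\Lambda_+$ with $t+\varepsilon>\|x\|$. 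So $L_q(t+\varepsilon,x)\geqslant0$, and closedness of the PSD cone gives $L_q(t,x)\geqslant0$. This proves $\Lambda_+(h_q,e)=\{L_q\geqslant0\}$ with size $K+1\leqslant m2^m+1$.
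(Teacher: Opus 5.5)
Your proof is correct and follows essentially the same route as the paper. The shared steps are: a Stinespring dilation with $\dim\pi(C_m)V\C^m\leqslant m2^m$; an arrow-shaped pencil, which after your sign fix is congruent to the paper's via $\operatorname{diag}(I_K,1/\sqrt3)$ up to reordering blocks; the Schur complement on $t>\|x\|$; monotonicity $\partial_th_q=3(t^2-\|x\|^2)$ to locate the largest root; and a closure argument, which you carry out more explicitly than the paper. The one fact you use without proof is that $-\|x\|$ really is an eigenvalue of $\Gamma(x)$ for $x\ne0$; it holds since $m\geqslant2$, because for a unit $y\perp x$ the operator $\Gamma(y)$ anticommutes with $\Gamma(x)$ and swaps its $\pm\|x\|$ eigenspaces, as the paper notes in the remark after the theorem.
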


\begin{proof}
By the dilation theorem for completely positive maps~\cite{Stinespring}, choose a
finite-dimensional representation
\[
\Theta(a)=V^*\pi(a)V,
\qquad
V\colon\C^m\to K.
\]
Since $\Theta$ is unital, $V$ is an isometry. Put
$\Gamma(x)=\pi(c(x))$. Then
\[
q(x)=(Vx)^*\Gamma(x)(Vx),
\qquad
\Gamma(x)^2=\|x\|^2I.
\]
Hence $|q(x)|\leqslant\|x\|^3$, so $h_q$ is hyperbolic by
Proposition~\ref{p:normal}.

Put $r=\|x\|$ and define
\begin{equation*}
L(t,x)=
\begin{pmatrix}
\dfrac t3&\sqrt{\dfrac23}\,(Vx)^*\\[2mm]
\sqrt{\dfrac23}\,Vx&tI+\Gamma(x)
\end{pmatrix}.
\end{equation*}
For $t>r$,
\[
(tI+\Gamma(x))^{-1}
=\frac{tI-\Gamma(x)}{t^2-r^2}.
\]
Using the Schur-complement criterion
\cite[Appendix~A.5.5]{BoydVandenberghe}, the Schur complement of the
lower-right block is
\begin{align*}
\frac t3
-\frac23(Vx)^*(tI+\Gamma(x))^{-1}(Vx)
&=\frac t3-\frac23\frac{tr^2-q(x)}{t^2-r^2}\\
&=\frac{h_q(t,x)}{3(t^2-r^2)}.
\end{align*}
Thus
\[
L(t,x)\succ0
\quad\Longleftrightarrow\quad
t>r\text{ and }h_q(t,x)>0.
\]
Let $\rho(x)$ be the largest root of $t\mapsto h_q(t,x)$. Then
$h_q(r,x)\leqslant0$ and
$\partial_th_q=3(t^2-r^2)\geqslant0$ for $t\geqslant r$, so
$\rho(x)\geqslant r$. Taking closures gives
\[
\Lambda_+(h_q,e)=\{(t,x)\mid L(t,x)\geqslant0\}.
\]
The dilation space may be taken to be a quotient of
$C_m\otimes\C^m$, so its dimension is at most $m2^m$.
\end{proof}

\begin{remark}
The pencil constructed in the proof of \Cref{t:ucp} satisfies
\[
 \det L(t,x)=\frac13 h_q(t,x)
 (t^2-\|x\|^2)^{d-1},\qquad 2d\leqslant m2^m.
\]
Indeed, for $x\ne0$, a unit vector $y\perp x$ gives an invertible
$\Gamma(y)$ anticommuting with $\Gamma(x)$, so the eigenvalues
$\pm\|x\|$ of $\Gamma(x)$ have equal multiplicity $d$.
The formula follows from the Schur complement computed above for
$t>\|x\|$, and hence holds identically as a polynomial identity.
Since $\partial_t h_q=3(t^2-\|x\|^2)$, the factor multiplying $h_q$
is a positive scalar multiple of a power of its derivative:
\[
 \det L=3^{-d}h_q(\partial_t h_q)^{d-1}.
\]
\end{remark}

Before constructing further realizations, we study the geometry of $R_m$, which will let us combine realizations and pass to limits.

\begin{proposition}\label{p:clgeom}
The set $R_m$ is a compact, convex, $O(m)$-invariant spectrahedral shadow.
It meets $\partial H_m$.
\end{proposition}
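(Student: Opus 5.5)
The plan is to view $R_m$ as the image of the set $\mathcal{U}$ of ucp maps $\Theta\colon C_m\to\M_m(\C)$ under the linear map
\[
\Psi\colon\Theta\longmapsto \bigl(x\mapsto x^*\Theta(c(x))x\bigr).
\]
This map depends only on the tuple $(\Theta(c_1),\ldots,\Theta(c_m))$, and it lands in $P_3(\R^m)$: the polynomial is real because $\Theta(c(x))$ is Hermitian for real $x$, and it is homogeneous of degree $3$. Compactness, convexity and the spectrahedral-shadow property will then all come from the corresponding properties of $\mathcal U$.

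\textbf{Convexity, compactness, shadow property.} Since $C_m$ is finite-dimensional, Choi's theorem identifies ucp maps $C_m\to\M_m(\C)$ with positive semidefinite Choi-type matrices subject to linear constraints. Concretely, write $C_m\cong\bigoplus_k\M_{n_k}(\C)$ as a finite direct sum of matrix algebras. A map is then completely positive exactly when each block Choi matrix is PSD, and unitality is an affine linear condition. Hence $\mathcal U$ is a spectrahedron in a finite-dimensional space. It is bounded, because ucp maps have norm $1$, and it is closed, so it is compact and convex. Its linear image $R_m=\Psi(\mathcal U)$ is therefore a compact convex spectrahedral shadow. Convexity can also be seen directly, since a convex combination of ucp maps is ucp.

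\textbf{$O(m)$-invariance.} Given $U\in O(m)$, the elements $c'_i=\sum_j U_{ji}c_j$ again satisfy the Clifford relations, because $U$ preserves the form $\delta_{ij}$. By universality there is a unital $*$-automorphism $\alpha_U$ of $C_m$ with $\alpha_U(c(x))=c(Ux)$. If $\Theta$ realizes $q$, set
\[
\Theta'(a)=U^T\Theta(\alpha_U(a))U.
\]
This is ucp, being a composition of a $*$-automorphism, a ucp map and a unitary conjugation. It satisfies
\[
x^*\Theta'(c(x))x=(Ux)^*\Theta(c(Ux))(Ux)=q(Ux),
\]
so $q\circ U\in R_m$.

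\textbf{Meeting $\partial H_m$.} I expect this to be the only mildly nontrivial point, and it is settled by exhibiting one example. Take $\Theta(a)=\omega(a)I_m$ for a state $\omega$? That gives $q(x)=\omega(c(x))\|x\|^2$, which is not a cubic unless it vanishes, so a different construction is needed. Instead use a $*$-representation $\pi$ of $C_m$ on some $\C^N$ with $N\geqslant m$, together with an isometry $V\colon\C^m\to\C^N$, and set $\Theta=V^*\pi(\cdot)V$.

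The simplest choice is a rank-one-type compression. Pick a unit vector $\xi$ that is an eigenvector of $\pi(c_1)$ with eigenvalue $1$, and take $\Theta(a)=\langle\xi,\pi(a)\xi\rangle\, e_1e_1^T+\omega(a)(I_m-e_1e_1^T)$, where $\omega$ is any state with $\omega(c_i)=0$, for instance the normalized trace. This is ucp as a direct sum of states. It gives
\[
q(x)=x_1^2\sum_i x_i\langle\xi,\pi(c_i)\xi\rangle.
\]
Choosing $\xi$ with $\langle\xi,\pi(c_i)\xi\rangle=\delta_{i1}$ yields $q(x)=x_1^3$, and this is possible because $\|\langle\xi,\pi(c(x))\xi\rangle\|\leqslant\|x\|$ with equality attained at the top eigenvector. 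Then $\|q\|_\infty=1$, attained at $x=e_1$, so $q\in R_m\cap\partial H_m$.

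In summary, the steps are the spectrahedral description of ucp maps, the automorphism argument, and the explicit example $x_1^3$. The last step requires the most care, namely checking that the cross terms in $\Theta$ vanish.
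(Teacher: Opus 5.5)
Your proof is correct and follows the paper's route. Choi matrices on the matrix summands of $C_m$ make the ucp maps a compact spectrahedron with $R_m$ as its linear image, and the universal property gives $\alpha_U$ for $O(m)$-invariance. For the boundary point, a state $\varphi$ built from a $+1$-eigenvector of $\pi(c_1)$ satisfies $\varphi(c_i)=0$ for $i\geqslant2$; your bound $|\varphi(c(x))|\leqslant\|x\|$ gives this, as does the paper's anticommutation argument.

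The only difference is your boundary example $x_1^3$. The paper simply takes $\Theta=\varphi(\cdot)I_m$, which realizes $q(x)=x_1\|x\|^2$ with $\|q\|_\infty=1$. Your reason for discarding this choice is mistaken: $\varphi(c(x))=x_1$ is linear in $x$, so $\varphi(c(x))\|x\|^2$ is a genuine homogeneous cubic.
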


\begin{proof}
The ucp maps form a compact spectrahedron through their Choi matrices
on the matrix-algebra summands of $C_m$~\cite{Choi}, and the map to cubics is linear.
Thus $R_m$ is a compact, convex spectrahedral shadow. For $U\in O(m)$, let
$\alpha_U$ be the $*$-automorphism determined by
\[
\alpha_U(c(x))=c(Ux).
\]
If $\Theta$ generates $q$, then
\[
\Theta_U(a):=U\,\Theta(\alpha_{U^T}(a))U^T
\]
generates $q(U^Tx)$.

To see that the boundary is met, choose a representation $\pi$ and a unit vector
$v$ with $\pi(c_1)v=v$. The Clifford relations give
\[
\langle v,\pi(c_i)v\rangle=0\qquad(i=2,\ldots,m).
\]
Thus the state $\varphi(a)=\langle v,\pi(a)v\rangle$ satisfies
$\varphi(c_1)=1$ and $\varphi(c_i)=0$ for $i>1$, and
\[
\Theta(a)=\varphi(a)I_m
\]
is ucp and generates
\[
q(x)=x_1\|x\|^2,\qquad \|q\|_\infty=1.\qedhere
\]
\end{proof}

The following obstruction reduces to the nonrepresentability theorem
for cones of nonnegative forms in~\cite{Scheiderer}.  It concerns the convex
body $H_m$, not the spectrahedrality of each individual
hyperbolicity cone.

\begin{theorem}\label{t:strict}
For every $m\geqslant5$, the convex body $H_m$ is not a spectrahedral shadow.
Consequently
\[
R_m\subsetneq H_m\qquad(m\geqslant5).
\]
\end{theorem}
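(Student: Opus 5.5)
We reduce to Scheiderer's theorem that for $n\geqslant3$ variables and degree $2d\geqslant6$ (and also $n\geqslant4$, $2d\geqslant4$), the cone of nonnegative forms is not a spectrahedral shadow. The class relevant here is ternary sextics: we will find a sextic cone as the homogenization, or an affine slice, of a linear image of $H_m$.

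The plan has three steps. First, we dualize and pass to the support function. Since $H_m$ is the unit ball of $\|\cdot\|_\infty$, its polar is the convex hull of the point evaluations $\pm\mathrm{ev}_x$ with $\|x\|=1$. A compact convex set is a spectrahedral shadow if and only if its polar is one. Polars of spectrahedral shadows containing the origin in their interior are again spectrahedral shadows, and $0\in\mathring H_m$. Hence it suffices to show that
\[
K_m=\operatorname{conv}\{\pm x^{\otimes3}\mid\|x\|=1\}\subset P_3(\R^m)^*
\]
is not a spectrahedral shadow. Equivalently, the cone over $H_m$, namely $\{(s,q)\mid |q(x)|\leqslant s\|x\|^3\}$, is not a shadow.

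Second, we restrict to a subspace and take a linear image to land in a known non-shadow. Fix $m=5$ first. Larger $m$ follows because $H_5$ is the intersection of $H_m$ with the subspace of cubics depending only on $x_1,\dots,x_5$, and intersections of shadows with subspaces are shadows. The idea is to consider cubics of the form $q(x)=x_5\,g(x_1,x_2,x_3)$ plus suitable terms, so that the condition $\|q\|_\infty\leqslant1$ becomes positivity of a sextic-type form in the first variables. Concretely, I would try
\[
q(x)=x_4\,a(x')+x_5\,b(x'),\qquad x'=(x_1,x_2,x_3),
\]
with $a,b$ quadratic forms. For fixed $x'$, maximizing over $(x_4,x_5)$ under the constraint $\|x'\|^2+x_4^2+x_5^2=1$ gives an explicit one-variable optimization in $\|(a,b)(x')\|$. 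The resulting condition has the form $a(x')^2+b(x')^2\leqslant\psi(\|x'\|^2)$ for an explicit $\psi$. Taking a further linear slice, for instance $b$ fixed, then expresses that a quartic-in-$a$ combination is dominated by a fixed form. This describes a set of the shape $\{a\mid F-a^2\geqslant0\}$. I would instead aim for a slice where the condition is genuinely linear in the free parameter, namely $\{p\in P_6(\R^3)\mid p\geqslant0\}$ intersected with an affine space. Scheiderer's results show that many such slices of positive sextics or quartics in $\geqslant3$ variables are non-shadows, since the proof works via nonexposed faces or sums-of-squares degree arguments. Alternatively, one can use his statement that $\Sigma$-type sets defined by $\{f\mid f\geqslant0 \text{ on } S\}$ for a 2-dimensional semialgebraic set $S$ fail to be shadows.

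Third, we conclude: since $R_m$ is a shadow by Proposition~\ref{p:clgeom}, and $R_m\subseteq H_m$ by Theorem~\ref{t:ucp}, the set $H_m$ cannot equal $R_m$.

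The main obstacle is the second step: arranging a linear slice or projection of $H_5$ (or $K_5$) that is exactly, up to linear isomorphism, a cone of nonnegative forms covered by Scheiderer's theorem. The cleanest route is probably the dual side. The image of $K_m$ under a linear map containing moments $x^{\otimes 3}$ restricted to a subfamily of the sphere, parametrized so that the cubic coordinates give all sextic monomials of a 3-variable parametrization, would be the moment body whose conic hull is dual to nonnegative ternary sextics. I would first try to choose a polynomial embedding of $S^2$ into $S^4$, such as via $(y_1^2-y_2^2,\,2y_1y_2,\dots)$, the Veronese-type map of degree 2, so that cubic monomials in $x$ restrict to all sextic monomials in $y$. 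A projection of $K_5$ is then the convex hull of the degree-6 moment curve of $S^2$. Its non-shadowness follows from Scheiderer, since the dual cone is positive ternary sextics, and a projection of a shadow is a shadow.
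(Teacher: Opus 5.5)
Your outer framework matches the paper's: polarity, the linear-section reduction from $m>5$ to $m=5$, a quadratic Veronese embedding $\nu\colon S^2\to S^4$, Scheiderer's theorem for $P_{3,6}$, and the final comparison with the shadow $R_m$. The decisive step fails as written, however. The set $\operatorname{conv}\{\nu(y)^{\otimes3}\mid y\in S^2\}$ is not a projection of $K_5$. It is the image of the moment body $\operatorname{conv}\{\operatorname{ev}_y\mid y\in S^2\}$ under the \emph{injective} adjoint $U^*$ of the restriction map $U(q)=q\circ\nu$. So it is a \emph{subset} of $K_5$, and subsets of spectrahedral shadows need not be shadows.

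A linear image of $K_5$ also contains the images of $x^{\otimes3}$ for every $x\in S^4$, not only for $x\in\nu(S^2)$. For it to collapse onto the moment body, every sextic with $|p|\leqslant1$ on $S^2$ would have to extend to a cubic of sup-norm at most one on $S^4$, and you do not address this. Dually, projecting $H_5$ by $U$ corresponds to intersecting $K_5$ with $\operatorname{im}U^*$, which may be strictly larger than $U^*$ of the moment body.

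The missing idea is to cut the moment body out of $K_5=H_5^\circ$ as an exposed face, that is, by an affine section, which does preserve shadows. This needs a cubic $q_0$ with $\|q_0\|_\infty=1$ whose contact set on $S^4$ is exactly $\nu(S^2)$; otherwise the face contains extra points $x^{\otimes3}$. The paper does this as follows:
\begin{enumerate}
\item Identify $\R^5$ with the traceless symmetric $3\times3$ matrices, and take $\nu(x)=(3xx^T-\|x\|^2I)/\sqrt6$ and $q_0(A)=\sqrt6\Tr(A^3)$.
\item A Lagrange-multiplier argument on the eigenvalues shows $q_0\leqslant1$ on the unit sphere, with equality precisely on $\nu(S^2)$.
\item Hence $E=\{T\in H_5^\circ\mid\langle q_0,T\rangle=1\}=\operatorname{conv}\{\nu(x)^{\otimes3}\}$ is an affine section of a shadow, and $U^*$ identifies its conical hull with $P_{3,6}^\vee$.
\end{enumerate}
You also need to prove, not just assert, that $U$ maps onto the ternary sextics. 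The paper does this by exhibiting cubics $Q_u$ with $Q_u(\nu(x))=(u^Tx)^6$; sixth powers of linear forms span the sextics.
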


\begin{proof}
First take $m=5$ and identify $\R^5$ with
\[
V:=\{A\in\Sym_3(\R)\mid\Tr A=0\}
\]
with its trace inner product. Put
\[
q_0(A):=\sqrt6\,\Tr(A^3),
\qquad
\nu(x):=\frac{3xx^T-\|x\|^2I}{\sqrt6}.
\]
Lagrange multipliers applied to the eigenvalues of $A$ show that
$q_0(A)\leqslant1$ on the unit sphere of $V$, with equality precisely at
$A=\nu(x)$, $x\in S^2$. Also
\[
\|\nu(x)\|=\|x\|^2,
\qquad q_0(\nu(x))=\|x\|^6.
\]
Suppose that $H_5$ is a spectrahedral shadow.  Polarity preserves
semidefinite representability
\cite{GouveiaNetzer}, so its polar and the following affine
section are spectrahedral shadows:
\[
E:=\{T\in H_5^\circ\mid\langle q_0,T\rangle=1\}
=\operatorname{conv}\{\nu(x)^{\otimes3}\mid x\in S^2\}.
\]
Here the polar is taken with respect to the pairing characterized
by $\langle q,A^{\otimes3}\rangle=q(A)$. Consider
\[
U\colon\operatorname{Sym}^3(V^*)\to\R[x_1,x_2,x_3]_6,
\qquad U(q)(x)=q(\nu(x)).
\]
This map is onto. Indeed, sixth powers span the ternary sextics, and for
$u\in S^2$, $B=\nu(u)$, and $a=\Tr(AB)$, the cubic
\[
Q_u(A):=\frac1{27}\left(
8a^3+12\sqrt6\,a\Tr(A^2B)+12\Tr(A^3B)+q_0(A)
\right)
\]
satisfies $Q_u(\nu(x))=(u^Tx)^6$ by direct substitution.

Let $P_{3,6}$ be the cone of nonnegative ternary sextics. Its dual is the
closed conical hull of the point evaluations $\operatorname{ev}_x$. Since
$U$ is onto and
\[
U^*(\operatorname{ev}_x)=\nu(x)^{\otimes3},
\]
$U^*$ identifies $P_{3,6}^{\vee}$ with the conical hull of $E$. But
$P_{3,6}$ is not a spectrahedral shadow
by~\cite{Scheiderer}. Since duals of closed
spectrahedral-shadow cones are spectrahedral shadows, $P_{3,6}^{\vee}$ is not
one either. This contradicts the semidefinite representability of $E$, since
homogenizing a lift of $E$ gives one for its conical hull.

For $m>5$, the cubics in the first five variables form a linear section of
$H_m$ equal to $H_5$. Thus $H_m$ is not a spectrahedral shadow. The set
$R_m$ is a spectrahedral shadow, being a linear image of the
compact spectrahedron of ucp maps. Hence
$R_m\subsetneq H_m$.
\end{proof}

\subsection{Clifford realizations from decomposable maps}

A positive $*$-linear map $\Psi\colon \M_N(\C)\to \M_m(\C)$ is decomposable if
\[
\Psi=\Psi_++\Psi_-\circ t,
\]
where $t$ denotes transpose and $\Psi_+$ and $\Psi_-$ are completely
positive.  The finite-dimensional Kraus representation of
completely positive maps~\cite{Choi} gives the equivalent expression
\begin{equation}\label{eq:decomposable-kraus}
\Psi(X)=\sum_\alpha V_\alpha^*XV_\alpha
+\sum_\beta W_\beta^*X^TW_\beta.
\end{equation}

\begin{proposition}\label{p:decomposable}
Let $\Psi\colon \M_N(\C)\to \M_m(\C)$ be unital and decomposable, and set
\[
q_\Psi(x):=x^*\Psi(\gamma(x))x.
\]
Then $q_\Psi\in R_m$.
\end{proposition}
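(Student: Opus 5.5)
The plan is to build a Clifford realization of $q_\Psi$ directly from the decomposition $\Psi=\Psi_++\Psi_-\circ t$, by writing both summands as compressions of $*$-representations of $C_m$. The point is that the transpose, although only an anti-homomorphism of $\M_N(\C)$, sends the Clifford generators $\gamma_i$ to another Clifford system.

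First, by the universal property of $C_m$ there is a unital $*$-representation
\[
\pi\colon C_m\to\M_N(\C),\qquad \pi(c_i)=\gamma_i,
\]
because the $\gamma_i$ are Hermitian and satisfy the defining anticommutation relations.

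Second, we produce a second representation from the transposes $\gamma_i^T$.
\begin{itemize}
\item Each $\gamma_i^T=\overline{\gamma_i}$ is Hermitian.
\item Transposing $\gamma_i\gamma_j+\gamma_j\gamma_i=2\delta_{ij}I_N$ gives $\gamma_j^T\gamma_i^T+\gamma_i^T\gamma_j^T=2\delta_{ij}I_N$. This relation is symmetric in $i,j$, so it is again the Clifford relation.
\end{itemize}
Hence there is a unital $*$-representation
\[
\pi'\colon C_m\to\M_N(\C),\qquad \pi'(c_i)=\gamma_i^T.
\]
By linearity, $\pi'(c(x))=\gamma(x)^T$ for real $x$.

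This second step is the only place where anything beyond bookkeeping happens. One must not try to write $\pi'=t\circ\pi$, since that map is not multiplicative. Instead $\pi'$ is defined on generators and the universal property is invoked, which needs only the relations verified above.

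Third, using the Kraus form \eqref{eq:decomposable-kraus}, define
\[
\Theta(a):=\sum_\alpha V_\alpha^*\pi(a)V_\alpha+\sum_\beta W_\beta^*\pi'(a)W_\beta .
\]
Each summand is a $*$-homomorphism followed by a conjugation, so $\Theta$ is completely positive. At the identity,
\[
\Theta(I)=\sum_\alpha V_\alpha^*V_\alpha+\sum_\beta W_\beta^*W_\beta=\Psi(I_N)=I_m,
\]
so $\Theta$ is unital. On linear elements,
\[
\Theta(c(x))=\sum_\alpha V_\alpha^*\gamma(x)V_\alpha+\sum_\beta W_\beta^*\gamma(x)^TW_\beta=\Psi(\gamma(x)).
\]
Therefore $x^*\Theta(c(x))x=q_\Psi(x)$ for all $x\in\R^m$. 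Thus $\Theta$ is a Clifford realization in the sense of \Cref{d:DCl}, and $q_\Psi\in R_m$.
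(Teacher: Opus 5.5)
Your proof is correct and follows the paper's argument. The paper stacks the Kraus operators into a single isometry $V$ and compresses the representation $(\gamma(x)\otimes I_a)\oplus(\gamma(x)^T\otimes I_b)$, which is exactly your sum $\sum_\alpha V_\alpha^*\pi(\cdot)V_\alpha+\sum_\beta W_\beta^*\pi'(\cdot)W_\beta$ written as one compression, and your check that the $\gamma_i^T$ are Hermitian and again satisfy the Clifford relations is the point the paper records as ``the summands satisfy the Clifford relations.''
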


\begin{proof}
Stack the Kraus operators in \eqref{eq:decomposable-kraus} on top of each other. Unitality gives
an isometry
\[
V\colon \C^m\to(\C^N)^{\oplus a}\oplus(\C^N)^{\oplus b}.
\]
On this direct sum put
\[
\Gamma(x)
=(\gamma(x)\otimes I_a)\oplus(\gamma(x)^T\otimes I_b).
\]
The summands satisfy the Clifford relations, and
\[
(Vx)^*\Gamma(x)(Vx)
=x^*\Psi(\gamma(x))x
=q_\Psi(x).
\]
The universal property of $C_m$ gives a $*$-representation
$\pi\colon C_m\to B(K)$. Hence $\Theta(a)=V^*\pi(a)V$ is ucp and generates
$q_\Psi$.
\end{proof}

\subsection{The case \texorpdfstring{$m=3$}{m=3}}

The next spectrahedrality conclusion also follows from~\cite{BuckleyKosir} and~\cite{Saunderson}.
The representation of the coefficient body $H_3$ in~\cite{Hildebrand}
also uses low-dimensional positive-map methods, but via a different
construction. 

\begin{corollary}\label{c:lowdim}
We have
\[
R_3=H_3.
\]
Consequently, every hyperbolic cubic in four variables has a
spectrahedral hyperbolicity cone.
\end{corollary}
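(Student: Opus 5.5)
The plan is to apply \Cref{p:decomposable} with $N=2$ and take the decomposability statement from the classical theorem of St{\o}rmer and Woronowicz: every positive map $\M_2(\C)\to\M_3(\C)$ (and $\M_2(\C)\to\M_2(\C)$) is decomposable. The inclusion $R_3\subseteq H_3$ already holds by the first part of the proof of \Cref{t:ucp}, so only $H_3\subseteq R_3$ needs an argument.

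\textbf{Step 1 (setup).} For $m=3$, choose the Pauli matrices $\gamma_1,\gamma_2,\gamma_3\in\M_2(\C)$. They are Hermitian and satisfy the anticommutation relations. Moreover
\[
S_3=\operatorname{span}_\C\{I_2,\gamma_1,\gamma_2,\gamma_3\}=\M_2(\C),
\]
so the operator system is the whole matrix algebra.

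\textbf{Step 2 (decomposability).} Let $q\in H_3$. By \Cref{p:spinpositive}, the unital $*$-linear map $\phi_q\colon\M_2(\C)\to\M_3(\C)$, defined by $\phi_q(\gamma_i)=T_i$, is positive. By Woronowicz's theorem, $\phi_q$ is decomposable, and it is unital by construction.

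\textbf{Step 3 (realization).} Applying \Cref{p:decomposable} with $\Psi=\phi_q$ gives $q_\Psi\in R_3$, where
\[
q_\Psi(x)=x^*\phi_q(\gamma(x))x=x^TT_q(x)x=q(x).
\]
Hence $q\in R_3$, and therefore $R_3=H_3$. The only nontrivial input is the St{\o}rmer--Woronowicz decomposability theorem. This is exactly why the argument stops at $m=3$: for $m\geqslant4$ the spin system $S_m$ lives in $\M_4(\C)$ or larger, where positive maps need not be decomposable.

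\textbf{Step 4 (four variables).} Let $h$ be a hyperbolic cubic in four variables. By \Cref{p:normal}, after a linear change of variables and passing to the quotient by $\ker b$, we may write $h=h_q$ with $q\in H_m$ and $m\leqslant3$. Taking preimages under the quotient map turns a spectrahedral description upstairs into one for the original cone, so it suffices to treat each $m\leqslant3$.
\begin{itemize}
\item If $m=3$, apply $R_3=H_3$ and \Cref{t:ucp}.
\item If $m\in\{1,2\}$, rerun the same argument with the first $m$ Pauli matrices. The positive map $\phi_q\colon S_m\to\M_m(\C)$ extends to a positive unital map $\M_2(\C)\to\M_m(\C)$ by the Remark following \Cref{p:spinpositive}. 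Since $m\leqslant2$, this extension is decomposable by St{\o}rmer's theorem ($\M_2\to\M_2$) or trivially ($m=1$). Then \Cref{p:decomposable} and \Cref{t:ucp} apply as before.
\item If $b\equiv0$, then $h=t^3$, whose hyperbolicity cone $\{t\geqslant0\}$ is a half-space and hence trivially spectrahedral.
\end{itemize}
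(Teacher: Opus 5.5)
Your proof is correct and follows the paper's argument: the Pauli matrices give $S_3=\M_2(\C)$, \Cref{p:spinpositive} makes $\phi_q$ unital and positive, Woronowicz's theorem makes it decomposable, and \Cref{p:decomposable} together with \Cref{t:ucp} completes the proof. Your explicit handling of the degenerate reductions in Step 4 fills in details the paper leaves implicit: $m\leqslant 2$ via the trace-projection extension and St{\o}rmer's theorem, and $h$ a cube.
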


\begin{proof}
The Pauli matrices give $S_3=\M_2(\C)$, and
Proposition~\ref{p:spinpositive} makes
\[
\phi_q\colon \M_2(\C)\to \M_3(\C)
\]
unital and positive. The decomposability theorem in~\cite{Woronowicz}
therefore applies, so Proposition~\ref{p:decomposable} proves the equality.
The cone statement follows from Theorem~\ref{t:ucp}.
\end{proof}

\subsection{Symmetric cubics}

The spectrahedrality of symmetric hyperbolic cubics was proved
in~\cite{BlekhermanLindbergShu}.
The proof below gives a Clifford realization of the normal-form cubic, and hence another proof of spectrahedrality.

\begin{theorem}\label{t:symmetric}
Let $p$ be a homogeneous cubic in $m+1$ variables which is invariant under
permutations of the variables and hyperbolic with respect to
$\mathbf1=(1,\ldots,1)\in\R^{m+1}$. Then its normal-form transverse cubic
belongs to $R_m$. Consequently, the hyperbolicity cone of $p$ is
spectrahedral.
\end{theorem}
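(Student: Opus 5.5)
The plan is to exploit the symmetry to reduce to a single extremal cubic, and then realize that cubic explicitly by a direct Stinespring-type construction in which the $\|x\|^2$-terms cancel.

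\emph{Step 1: reduction to one cubic.} After the normal-form reduction of \Cref{p:normal}, with $e=\mathbf1$, the transverse space is $W=\mathbf1^\perp\subset\R^{m+1}$, with $\dim W=m$. It carries the standard representation of $S_{m+1}$, and the reduction commutes with the permutation action. Hence the transverse cubic $q$ is an $S_{m+1}$-invariant cubic on $W$. On $W$ we have $p_1=0$, so the invariant cubics $p_1^3$ and $p_1p_2$ vanish there and the only surviving invariant is $p_3=\sum_k y_k^3$. Thus $q=\lambda\,p_3|_W$, and \Cref{p:Hm} gives $|\lambda|\leqslant 1/\|p_3|_W\|_\infty$. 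Maximizing $p_3$ on the unit sphere of $W$ (Lagrange multipliers force at most two distinct coordinate values) gives the value $(m-1)/\sqrt{m(m+1)}$. By \Cref{p:clgeom}, $R_m$ is convex and invariant under $-I\in O(m)$, which sends $q$ to $-q$. Moreover $0\in R_m$, realized by $\Theta=\varphi\cdot I_m$ for a tracial state $\varphi$. It therefore suffices to realize the single cubic $q_0=\frac{\sqrt{m(m+1)}}{m-1}\,p_3|_W$.

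\emph{Step 2: an explicit realization.} Let $f_k$ be the orthogonal projection of the $k$-th basis vector $e_k$ onto $W$. These satisfy $\|f_k\|=s:=\sqrt{m/(m+1)}$ and $\sum_k f_kf_k^T=I_W$. Put $u_k=f_k/s$ and $y_k=u_k\cdot x$, so that $\sum_k y_k=0$, $\sum_k y_k^2=r^2/s^2$ with $r=\|x\|$, and $p_3(x)=s^3\sum_k y_k^3$. Take $m+1$ copies of a representation $\pi$ of $C_m$, and in the $k$-th copy a unit vector $v_k$ with $\pi(c(u_k))v_k=v_k$; as in the proof of \Cref{p:clgeom}, $\langle v_k,\pi(c(x))v_k\rangle=y_k$. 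Define
\[
\xi_k(x)=\alpha\bigl(y_k v_k-t\,\pi(c(x))v_k\bigr),\qquad Vx=\textstyle\bigoplus_k\xi_k(x).
\]
Using $\pi(c(x))^2=r^2$, one computes
\[
\langle\xi_k,\pi(c(x))\xi_k\rangle=\alpha^2\bigl(y_k^3-2t\,y_kr^2+t^2r^2y_k\bigr).
\]
Summing over $k$, the $y_kr^2$ terms cancel because $\sum_k y_k=0$, leaving $\alpha^2\sum_k y_k^3$. For the norm we get $\|Vx\|^2=\alpha^2\bigl[(1-2t)s^{-2}+(m+1)t^2\bigr]r^2$. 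Choosing $t=1/m$ minimizes the bracket to $(m^2-1)/m^2$, and the isometry condition then forces $\alpha^2=m^2/(m^2-1)$.

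\emph{Step 3: check the constant.} With these choices,
\[
q(x)=\alpha^2 s^{-3}p_3(x)=\frac{m^2}{m^2-1}\Bigl(\frac{m+1}{m}\Bigr)^{3/2}p_3(x)=\frac{\sqrt{m(m+1)}}{m-1}\,p_3(x)=q_0(x).
\]
The map $\Theta(a)=V^*\pi^{\oplus(m+1)}(a)V$ is ucp and generates $q_0$, so $q_0\in R_m$. Step 1 then gives the whole segment of admissible multiples of $p_3$, and \Cref{t:ucp} yields spectrahedrality of the cone of $p$.

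I expect the main obstacle to be Step 2, namely finding a realization that actually attains the extremal cubic. The naive frame construction $\Theta(a)=\sum_k\varphi_{u_k}(a)\,s^2u_ku_k^T$ only reaches the fraction $(m-1)/m$ of $q_0$. The correction term $-t\,\pi(c(x))v_k$, whose cubic contributions vanish by $\sum_k y_k=0$ while it reduces the norm of $Vx$, is what closes this gap.
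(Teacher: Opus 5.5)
Your Step~1 and your vectors in Step~2 are the paper's construction. With $t=1/m$ and $\alpha^2=m^2/(m^2-1)$ you get $\xi_k(x)=(m\,y_k-\pi(c(x)))v_k/\sqrt{m^2-1}$. This is exactly the paper's $(Jx)_a$ once $\pi$ is left multiplication on $L^2(C_m,\tau_0)$ and $v_k=(I+c(u_k))/\sqrt2$. Your cubic computation and the constant check are correct.

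The gap is unitality. You check $\|Vx\|^2=\|x\|^2$ only for real $x$, but $\Theta$ is unital only if $V^*V=I_m$ on $\C^m$. For real $x,x'$ your own expansion gives
\[
\langle Vx,Vx'\rangle=\langle x,x'\rangle+i\,\alpha^2t^2\sum_k\operatorname{Im}\langle v_k,\pi(c(x)c(x'))v_k\rangle .
\]
The other terms are real, and the anticommutation relations fix only the real part of $\langle v_k,\pi(c(x)c(x'))v_k\rangle$. The imaginary part comes from the commutator $[\pi(c(x)),\pi(c(x'))]$. So $V^*V=I_m+i\alpha^2t^2\Omega$ with $\Omega$ real antisymmetric, and for an arbitrary $+1$-eigenvector $v_k$ this $\Omega$ need not vanish.

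For instance, take $m=4$ and $\pi$ irreducible on $\C^4$. The $+1$-eigenspace of $\pi(c(u_k))$ is two-dimensional, and the bivectors $\pi(c(a)c(b))$ with $a,b\perp u_k$ act on it, in a suitable basis, as $i$ times the Pauli matrices. Replacing $v_k$ by an orthogonal unit vector in that eigenspace reverses the sign of the $k$-th summand of $\Omega$. Hence some admissible choices give $\Omega\neq0$, and then $\Theta$ is not unital.

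What is missing is a choice of $v_k$ whose vector state annihilates bivectors, i.e.\ $\langle v_k,\pi(c(x)c(x'))v_k\rangle=\langle x,x'\rangle$ for all real $x,x'$. The paper's choice does this. In $L^2(C_m,\tau_0)$ with $v_k=(I+c(u_k))/\sqrt2$, traciality gives $\langle v_k,\ell_a v_k\rangle=\tau_0\bigl(a(I+c(u_k))\bigr)$. Since $\tau_0$ vanishes on products of an odd number of generators, $\tau_0(c(x)c(x')c(u_k))=0$. This is why the paper verifies the isometry for all $x\in V_\C$.

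Alternatively, keep your arbitrary $v_k$ and symmetrize. Let $\bar\pi$ be the conjugate representation, $\bar\pi(c_i)\bar v=\overline{\pi(c_i)v}$, and put $\bar V x=\overline{V\bar x}$. Replace $\pi$ by $\pi\oplus\bar\pi$ and $V$ by $(V\oplus\bar V)/\sqrt2$. This leaves $x^*\Theta(c(x))x=q_0(x)$ unchanged for real $x$ and replaces $V^*V$ by its real part $I_m$. With either repair your argument is complete and coincides with the paper's proof.
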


\begin{proof}
The case $m\leqslant1$ is immediate. Let $m\geqslant2$ and put
$V=\mathbf1^\perp\subseteq\R^{m+1}$. If $p$ is a cube, its transverse cubic
is zero and belongs to $R_m$: take
$\Theta(a)=\tau(a)I_m$, where $\tau$ is a tracial state on $C_m$. Assume that
$p$ is not a cube.

The invariant quadratic and cubic forms on $V$ are spanned by $\|x\|^2$ and
$\sum_ax_a^3$. The normal form is therefore
\[
h_\lambda(t,x)=t^3-3\|x\|^2t+2\lambda q(x),
\qquad
q(x):=\frac{\sqrt{m(m+1)}}{m-1}
\sum_{a=1}^{m+1}x_a^3,
\qquad x\in V.
\]
Here $\|q\|_\infty=1$, so $|\lambda|\leqslant1$. Since
$R_m$ is convex and invariant under sign, it suffices to prove
$q\in R_m$.

Let $P$ be the orthogonal projection of $\R^{m+1}$ onto $V$, and put
\[
s_a:=\sqrt{\frac{m+1}{m}}\,Pe_a
\quad(a=1,\ldots,m+1).
\]
Then the $s_a$ are unit vectors in $V$ satisfying
\[
\sum_as_a=0,
\qquad
\sum_as_as_a^T=\frac{m+1}{m}I_V,
\qquad
q(x)=\frac{m^2}{m^2-1}\sum_{a=1}^{m+1}\langle s_a,x\rangle^3
\quad(x\in V).
\]
Identify $V$ orthogonally with $\R^m$, and write
$c(v)=\sum_i v_i c_i$ for $v\in V$. Let $\tau_0$ be the canonical trace on
$C_m$; it extracts the scalar coefficient in the Clifford expansion. Equip
$C_m$ with $\langle a,b\rangle=\tau_0(a^*b)$, put
$K:=C_m^{\oplus(m+1)}$, and let $\pi(a)$ be the operator of left multiplication
by $a$ in each summand. Define
\[
J\colon V_\C\to K,
\qquad
(Jx)_a:=\frac{
\bigl(m\langle s_a,x\rangle I-c(x)\bigr)(I+c(s_a))}
{\sqrt{2(m^2-1)}}.
\]
The Clifford relations and trace identities, together with
\[
\sum_a\bigl|\langle s_a,x\rangle\bigr|^2
=\frac{m+1}{m}\|x\|^2
\qquad(x\in V_\C),
\]
give
\[
\|Jx\|^2=\|x\|^2.
\]
Thus $J$ is an isometry. For $x\in V$, the Clifford relations and
$\sum_a\langle s_a,x\rangle=0$ give
\begin{align*}
(Jx)^*\pi(c(x))(Jx)
&=\frac{m^2}{m^2-1}\sum_a\langle s_a,x\rangle^3\\
&=q(x).
\end{align*}
Thus $\Theta(a)=J^*\pi(a)J$ is a Clifford realization of $q$. Now apply
Theorem~\ref{t:ucp}.
\end{proof}

\subsection{A contraction criterion and a neighborhood of the origin}

We next prove a sufficient condition for Clifford realizability by constructing the required ucp map explicitly. For related matrix-range
and dilation results, see~\cite{EvertPasser,PasserShalitSolel}. 

\begin{theorem}\label{t:uniform}
Let $m\geqslant1$ and let $A_1,\ldots,A_m\in\Sym_m(\R)$ satisfy
\[
\sum_{i=1}^m A_i^2\leqslant I_m.
\]
There is a ucp map $\Theta\colon C_m\to\M_m(\C)$ such that
$\Theta(c_i)=A_i$ for every $i$. Consequently, if
\[
q(x)=x^TA(x)x,\qquad A(x):=\sum_{i=1}^m x_iA_i,
\]
then $q\in R_m$ and $\Lambda_+(h_q,e)$ is spectrahedral.
\end{theorem}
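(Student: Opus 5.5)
The plan is to write down an explicit Stinespring-type dilation, modelled on the one used for symmetric cubics in \Cref{t:symmetric}. Equip $C_m$ with the inner product $\langle a,b\rangle=\tau_0(a^*b)$ from the canonical trace, and let $C_m$ act on itself by left multiplication. The elements $I,c_1,\ldots,c_m$ are orthonormal for this inner product. Moreover, for $i\neq j$ the product $c_jc_i$ is a grade-two element, so it is orthogonal to $I$ and to every $c_k$.

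Put $K:=(\C^m\otimes C_m)\oplus(\C^m\otimes C_m)$ and let $\pi(a)$ act by left multiplication on the $C_m$-factor in both summands. Set $S:=\sum_iA_i^2$, so $0\leqslant S\leqslant I_m$, and $E:=\bigl(I_m-\tfrac12(I_m+S)\bigr)^{1/2}=\bigl(\tfrac12(I_m-S)\bigr)^{1/2}$. The square root exists exactly because of the hypothesis $S\leqslant I_m$. Define
\[
Vv:=\Bigl(\tfrac1{\sqrt2}\,v\otimes I+\tfrac1{\sqrt2}\sum_{i=1}^m A_iv\otimes c_i\Bigr)\oplus\bigl(Ev\otimes I\bigr).
\]

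The first check is that $V$ is an isometry. By orthonormality of $I,c_1,\ldots,c_m$,
\[
V^*V=\tfrac12 I_m+\tfrac12 S+E^2=I_m.
\]

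The second check is $V^*\pi(c_j)V=A_j$. In the second summand, $\langle Ew\otimes I,\,Ev\otimes c_j\rangle=0$ because $\tau_0(c_j)=0$, so that summand contributes nothing. In the first summand, left multiplication by $c_j$ sends $I\mapsto c_j$, $c_j\mapsto I$, and $c_i\mapsto c_jc_i$ for $i\neq j$. Pairing with $Vw$, only two terms survive: $\tfrac12\langle w,A_jv\rangle$ and $\tfrac12\langle A_jw,v\rangle$. Since $A_j$ is self-adjoint, these add up to $\langle w,A_jv\rangle$. The grade-two terms $c_jc_i$ drop out by orthogonality.

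Hence $\Theta(a):=V^*\pi(a)V$ is completely positive, being a compression of a $*$-representation. It is unital since $V$ is an isometry, and it satisfies $\Theta(c_i)=A_i$. Then
\[
x^*\Theta(c(x))x=x^TA(x)x=q(x),
\]
so $q\in R_m$ by \Cref{d:DCl}, and \Cref{t:ucp} gives spectrahedrality of $\Lambda_+(h_q,e)$.

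The main obstacle is the balancing of coefficients, and it is already resolved by the choice above. The naive ansatz $Vv=Dv\otimes I+\sum_iA_iv\otimes c_i$ with $D=(I-S)^{1/2}$ yields $\Theta(c_j)=DA_j+A_jD$ instead of $A_j$. The fix is to use equal weights $\tfrac1{\sqrt2}$ on the $I$-term and on the $c_i$-terms, which makes the cross term exactly $A_j$. The remaining defect $\tfrac12(I-S)\geqslant0$ is then absorbed in a second summand on which every $c_j$ has zero expectation.
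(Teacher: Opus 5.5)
Your proposal is correct and is essentially the paper's own proof. It uses the same isometry $V_0u=\frac1{\sqrt2}\bigl(I\otimes u+\sum_i c_i\otimes A_iu\bigr)$ with equal weights, the same defect term $\bigl(\tfrac12(I_m-\sum_iA_i^2)\bigr)^{1/2}$ placed on a second copy of $L^2(C_m,\tau_0)\otimes\C^m$, the same trace identities to get $V^*\pi(c_j)V=A_j$, and the same conclusion via Theorem~\ref{t:ucp}.
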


\begin{proof}
Let $\tau_0$ be the canonical trace on $C_m$, namely the coefficient of
$I$ in the ordered Clifford-monomial basis.  Let $\ell_a$ denote left
multiplication by $a$ on $L^2(C_m,\tau_0)$. Define
\[
V_0u:=\frac1{\sqrt2}
\left(I\otimes u+\sum_i c_i\otimes A_i u\right).
\]
The Clifford trace identities give
\[
V_0^*V_0=\frac12\left(I_m+\sum_iA_i^2\right).
\]
Put
\[
D:=\left(\frac12\left(I_m-\sum_iA_i^2\right)\right)^{1/2},
\qquad
V_1u:=I\otimes Du,
\]
and set $V=V_0\oplus V_1$. Then $V^*V=I_m$. On two copies of
$L^2(C_m,\tau_0)\otimes\C^m$, define
\[
\pi(a):=(\ell_a\otimes I_m)\oplus(\ell_a\otimes I_m).
\]
Since
\[
\tau_0(c_i)=0,
\qquad
\tau_0(c_ic_j)=\delta_{ij},
\qquad
\tau_0(c_ic_jc_k)=0,
\]
we have
\[
V_0^*(\ell_{c_j}\otimes I_m)V_0=A_j,
\qquad
V_1^*(\ell_{c_j}\otimes I_m)V_1=0.
\]
Thus $V^*\pi(c_j)V=A_j$ for every $j$. The map
$\Theta(a)=V^*\pi(a)V$ is ucp, and
\[
x^*\Theta(c(x))x
=x^*A(x)x
=q(x).
\]
Thus $\Theta$ is a Clifford realization of $q$. Now apply
Theorem~\ref{t:ucp}.
\end{proof}

Applying the matrix criterion to the canonical pencil gives a test
directly in terms of the cubic, which holds on a neighborhood of the origin.

\begin{corollary}\label{c:canonical-row}
Let $q\in P_3(\R^m)$ and write $T_q(x)=\sum_i x_iT_i$. If
\[
\sum_iT_i^2\leqslant I_m,
\]
then $q\in R_m$ and $\Lambda_+(h_q,e)$ is spectrahedral. In particular,
this holds whenever $\|q\|_\infty\leqslant m^{-1/2}$.
\end{corollary}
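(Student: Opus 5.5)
The first claim is a direct application of \Cref{t:uniform}. The matrices $T_i$ are real symmetric, since $u^TT_q(x)v=B_q(x,u,v)$ is symmetric in $u,v$. We also have $x^TT_q(x)x=q(x)$. Taking $A_i:=T_i$, the hypothesis $\sum_iT_i^2\leqslant I_m$ is exactly the hypothesis of \Cref{t:uniform}. That theorem then gives a ucp map $\Theta$ with $\Theta(c_i)=T_i$, hence $q\in R_m$, and spectrahedrality follows from \Cref{t:ucp}.

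For the second claim I will show that $\|q\|_\infty\leqslant m^{-1/2}$ forces $\sum_iT_i^2\leqslant I_m$. Fix a unit vector $u\in\R^m$. Then
\[
u^T\Bigl(\sum_iT_i^2\Bigr)u=\sum_i\|T_iu\|^2=\sum_{i,j}\bigl(e_j^TT_iu\bigr)^2=\sum_{i,j}B_q(e_i,e_j,u)^2 .
\]
By symmetry of $B_q$, the last sum equals $\sum_{i,j}B_q(u,e_i,e_j)^2=\|T_q(u)\|_F^2$, the squared Frobenius norm of the symmetric matrix $T_q(u)$.

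Since $T_q(u)$ is $m\times m$, its Frobenius norm satisfies $\|T_q(u)\|_F^2\leqslant m\|T_q(u)\|^2$. By \Cref{p:contract},
\[
\|T_q(u)\|\leqslant\|q\|_\infty\|u\|=\|q\|_\infty .
\]
Combining the two bounds gives
\[
u^T\Bigl(\sum_iT_i^2\Bigr)u\leqslant m\|q\|_\infty^2\leqslant1
\]
for every unit $u$. This is the required operator inequality $\sum_iT_i^2\leqslant I_m$.

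The only step needing care is the index swap identifying $\sum_i T_i^2$ with a Frobenius norm of $T_q(u)$; it uses full symmetry of the trilinear form. Everything else is quoted from \Cref{p:contract} and \Cref{t:uniform}.
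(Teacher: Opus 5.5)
Your proof is correct. The first part, applying \Cref{t:uniform} with $A_i=T_i$ and then \Cref{t:ucp}, is exactly what the paper does.

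For the bound under $\|q\|_\infty\leqslant m^{-1/2}$ you take a slightly different route. The paper applies \Cref{p:contract} to each coefficient matrix separately. Since $T_i=T_q(e_i)$, it gets $\|T_i\|\leqslant\|q\|_\infty$, hence $T_i^2\leqslant\|q\|_\infty^2I_m$. Summing these $m$ inequalities gives $\sum_iT_i^2\leqslant m\|q\|_\infty^2I_m$, with no index swap needed.

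You instead use full symmetry of $B_q$ to prove the identity $u^T\bigl(\sum_iT_i^2\bigr)u=\|T_q(u)\|_F^2$. You then spend the factor $m$ in the Frobenius-versus-operator-norm comparison for the single matrix $T_q(u)$.

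The paper's argument is shorter, but yours is more informative. Your identity shows that the hypothesis $\sum_iT_i^2\leqslant I_m$ is \emph{equivalent} to $\|T_q(u)\|_F\leqslant1$ for all unit $u$. This condition can beat the crude constant $m^{-1/2}$ when $T_q(u)$ has few large eigenvalues. For instance, in \Cref{ex:diagonal-row} one has $\|T_q(u)\|_F^2=\sum_ia_i^2u_i^2$, which gives the exact condition $\max_i|a_i|\leqslant1$ directly.
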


\begin{proof}
Apply Theorem~\ref{t:uniform} with $A_i=T_i$, using
$q(x)=x^TT_q(x)x$. For the last assertion,
Proposition~\ref{p:contract} gives $\|T_i\|\leqslant\|q\|_\infty$, so
\(
\sum_iT_i^2\leqslant m\|q\|_\infty^2I_m\leqslant I_m.
\)
\end{proof}

The last criterion can also hold at the  boundary of $H_m$, as the following example illustrates.
\begin{example}\label{ex:diagonal-row}
For $q(x)=\sum_i a_i x_i^3$, the canonical coefficients are
$T_i=a_i e_ie_i^T$, whence
\[
\sum_iT_i^2=\operatorname{diag}(a_1^2,\ldots,a_m^2).
\]
Moreover, $\|q\|_\infty=\max_i|a_i|$: the upper bound follows from
$\sum_i|x_i|^3\leqslant1$ on the unit sphere, and equality is attained
at a coordinate vector. Thus Corollary~\ref{c:canonical-row} applies to
every hyperbolic cubic in this family, including its boundary, in every
dimension. By the $O(m)$-invariance of $R_m$, the same conclusion holds
after any orthogonal change of variables.
\end{example}

\subsection{Sparse quartics in 4 variables}

The ingredients of the next construction are the sum-of-squares theorem
for nonnegative ternary quartics~\cite{Hilbert}, and the low-dimensional
decomposability theorem in~\cite{Woronowicz}. 

\begin{theorem}\label{t:quartic}
Let $a\colon \R^3\to\R$ be a homogeneous quartic, and put
\[
p_a(t,x)=t^4-6\|x\|^2t^2+a(x).
\]
Then $p_a$ is hyperbolic with respect to $e=(1,0)\in\R^4$ if and only if
\begin{equation}\label{eq:quartic-bound}
0\leqslant a(x)\leqslant9\|x\|^4\qquad(x\in\R^3).
\end{equation}
Whenever these inequalities hold, the hyperbolicity cone of $p_a$ is
spectrahedral.
\end{theorem}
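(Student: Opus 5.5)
The plan has two parts: first the hyperbolicity criterion \eqref{eq:quartic-bound}, then a Hermitian pencil for the cone. The pencil is modeled on \Cref{t:ucp}, but with the Clifford pencil replaced by one adapted to the quartic. The structural input will be the sum-of-squares theorem for nonnegative ternary quartics~\cite{Hilbert} together with decomposability of positive maps $\M_2(\C)\to\M_3(\C)$~\cite{Woronowicz}, used as in \Cref{c:lowdim}.

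\textbf{Hyperbolicity.} Put $r=\|x\|$ and $s=t^2$. Then $p_a(t,x)=s^2-6r^2s+a(x)$. Since $p_a$ is homogeneous and $e=(1,0)$, the roots of $\sigma\mapsto p_a((t,x)+\sigma e)$ are translates of the roots of $t\mapsto p_a(t,x)$. So hyperbolicity means that for every $x$ all four roots in $t$ are real. This holds exactly when both roots in $s$ are real and nonnegative, i.e.
\[
36r^4-4a(x)\geqslant0,\qquad a(x)\geqslant0,\qquad 6r^2\geqslant0 ,
\]
which is \eqref{eq:quartic-bound}. In that case the largest root is
\[
\rho(x)=\Bigl(3r^2+\sqrt{b(x)}\Bigr)^{1/2},\qquad b:=9\|x\|^4-a\geqslant0,
\]
and $\Lambda_+(p_a,e)=\{(t,x)\mid t\geqslant\rho(x)\}$.

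\textbf{Reduction to a pencil.} By \eqref{eq:quartic-bound}, $b$ is a nonnegative ternary quartic, so $b=\sum_k f_k^2$ with quadratic forms $f_k(x)=x^TF_kx$. The goal is a Hermitian linear pencil $M(x)$ whose squares satisfy:
\begin{itemize}
\item every eigenvalue of $M(x)^2$ lies in $\{3r^2-\sqrt{b},\,3r^2+\sqrt{b}\}$;
\item the value $3r^2+\sqrt b$ actually occurs.
\end{itemize}
Then the spectral radius of $M(x)$ is $\rho(x)$, and
\[
\Lambda_+(p_a,e)=\{(t,x)\mid tI-M(x)\geqslant0,\ tI+M(x)\geqslant0\},
\]
which is spectrahedral. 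Note that $\pm\sqrt{3}\,r$-type pencils already come from Pauli matrices $\sigma(x)$, because $\sigma(x)^2=r^2I_2$. So the task is to perturb $\sqrt3\,\sigma(x)\otimes I$ by an anticommutator-type correction that shifts the squared eigenvalues by $\pm\sqrt{b(x)}$.

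\textbf{Realizing the shift.} I would encode the shift by the unital $*$-linear map $\M_2(\C)=S_3\to\M_3(\C)$ sending $\sigma_i\mapsto G_i$, where the $G_i$ are symmetric matrices built from the $F_k$ (for instance $G(x)=\sum_i x_iG_i$ with $x^TG(x)$ recovering the vector $(f_k(x))_k$ after a suitable normalization). Positivity of this map should be equivalent, via \eqref{eq:spin-cone}, to a norm bound $\|G(x)\|\leqslant c\|x\|$. That bound is where $\sqrt b\leqslant3r^2$, i.e.\ $a\geqslant0$, enters. By~\cite{Woronowicz} the map is decomposable, so as in \Cref{p:decomposable} one gets an isometry $V$ and Clifford tuples $\Gamma(x)=(\sigma(x)\otimes I)\oplus(\sigma(x)^T\otimes I)$ with $G(x)=V^*\Gamma(x)V$. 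Then I would assemble a block pencil of the form
\[
M(x)=\begin{pmatrix}\alpha\,\Gamma(x)& \beta\,V\,G(x)\\ \beta\,G(x)V^*&\alpha'\,\Gamma'(x)\end{pmatrix}.
\]
The constants $\alpha,\alpha',\beta$ would be chosen so that a Schur-complement computation, as in the proof of \Cref{t:ucp}, reduces $\det(tI-M(x))$ to a power of $t^2-3r^2$ times $p_a(t,x)$. As in the remark after \Cref{t:ucp}, the spurious factors have roots at most $\sqrt3\,r\leqslant\rho(x)$, so they do not cut into the cone.

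\textbf{Main obstacle.} The delicate step is this last one: choosing the coupling so that the Schur complement produces exactly $s^2-6r^2s+a$ rather than merely an inequality. I expect it to need the Gram (sum-of-squares) matrix of $a$ itself, not just that of $b$. So I would try both factorizations $a=\sum g_k^2$ and $b=\sum f_k^2$ and use whichever makes the Clifford relations close up. If exact cancellation fails, I would fall back on proving that the cone is cut out by the two conditions $t^2\geqslant3r^2$ and $(t^2-3r^2)^2\geqslant b(x)$. I would realize each condition by its own pencil, using $\sqrt{b}=\|(f_k(x))_k\|$ and the spin-cone criterion \eqref{eq:spin-cone}, and then take the direct sum.
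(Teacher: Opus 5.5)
\textbf{There is a genuine gap: the proposal never produces a linear pencil.} Your hyperbolicity argument is correct. So is the reduction: a Hermitian pencil $M(x)$ with symmetric spectrum and $\|M(x)\|=\rho(x)$ cuts out the cone. The trouble is that the step where the pencil must be built is left open.

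\begin{itemize}
\item The block pencil you write is not well defined. It is unclear on which space $\Gamma'(x)$ acts, or why the couplings $\beta VG(x)$ would give $M(x)^2$ the eigenvalues $3\|x\|^2\pm\sqrt{b(x)}$. No choice of $\alpha,\alpha',\beta$ is checked, and you flag this step as unresolved yourself.
\item The pairing in your map is the underlying problem. With $G(x)=\sum_ix_iG_i$, the Pauli matrices are attached to the coordinates of $x$. Decomposability then only gives $G(x)=V^*\Gamma(x)V$ with $\Gamma(x)^2=\|x\|^2I$, a compression of a linear Clifford pencil. That accounts for the $3\|x\|^2$ part, but nothing in it produces the shift $\pm\sqrt b$.
\item The fallback has the same gap. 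By the spin-cone criterion, the two conditions together are equivalent to $(t^2-3\|x\|^2)I_2-Q(x)\succeq0$, where $Q(x)=\sum_{k=1}^3f_k(x)\sigma_k$ (three squares suffice by Hilbert). This matrix inequality is quadratic in $(t,x)$, and a direct sum of pencils cannot make it linear.
\end{itemize}

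\textbf{The missing idea is a Gram factorization with a linear factor.} You need $3\|x\|^2I_2+Q(x)=W(x)^*W(x)$ with $W(x)$ linear in $x$; equivalently, the biquadratic form $y^*(3\|x\|^2I_2+Q(x))y$ is a sum of squares. Positivity plus decomposability gives exactly this, once the Pauli matrices are paired with the quadratic forms $f_k(x)=x^TF_kx$ rather than with the coordinates.

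Take $\Phi\colon\M_3(\C)\to\M_2(\C)$, $\Phi(X)=3\Tr(X)I_2+\sum_k\Tr(F_kX)\sigma_k$. Up to a factor $6$, this is the adjoint of the unital map $\sigma_k\mapsto F_k/3$, so your sense of where $a\geqslant0$ enters is right. For $z=u+iv$ and $f=(f_1,f_2,f_3)$,
\[
\lambda_{\min}\Phi(zz^*)=3\|z\|^2-\|f(u)+f(v)\|\geqslant3\|z\|^2-\sqrt{b(u)}-\sqrt{b(v)}\geqslant0 .
\]
So $\Phi$ is positive, hence decomposable by Woronowicz's theorem. Evaluate a Kraus form \eqref{eq:decomposable-kraus} at $X=xx^T=X^T$ and stack the $1\times2$ rows $x^TV_\alpha$, $x^TW_\beta$; this gives $W(x)$.

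Now take $M(x)=\begin{pmatrix}0&W(x)\\W(x)^*&0\end{pmatrix}$ and the pencil $\begin{pmatrix}tI_N&W(x)\\W(x)^*&tI_2\end{pmatrix}$. For $t>0$, a Schur complement shows this pencil is positive semidefinite iff $(t^2-3\|x\|^2)I_2\succeq Q(x)$, that is, iff $t\geqslant\rho(x)$. At $t=0$ it forces $W(x)=0$, hence $x=0$, since $\Tr(W^*W)=6\|x\|^2$.

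Some of your expectations about the outcome are off:
\begin{itemize}
\item The determinant of the pencil is $t^{N-2}p_a$, not a power of $t^2-3\|x\|^2$ times $p_a$.
\item $M(x)^2=WW^*\oplus W^*W$ also has zero eigenvalues, so only its largest eigenvalue needs controlling; your requirement on all eigenvalues is stronger than necessary.
\item No sum-of-squares decomposition of $a$ is needed.
\end{itemize}
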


\begin{proof}
Put $r=\|x\|$ and $s=t^2$. The two roots in $s$ are
\[
s_\pm=3r^2\pm\sqrt{9r^4-a(x)}.
\]
Thus all four roots in $t$ are real exactly when $s_\pm$ are real and
nonnegative, which is equivalent to \eqref{eq:quartic-bound}.

Assume \eqref{eq:quartic-bound} and set
\[
g(x)=9\|x\|^4-a(x).
\]
By the sum-of-squares theorem for nonnegative ternary quartics
\cite{Hilbert},
\[
g=q_1^2+q_2^2+q_3^2
\]
for real quadratic forms $q_j$. With the Pauli matrices, set
\[
Q(x)=\sum_jq_j(x)\sigma_j,\qquad
Q(x)^2=g(x)I_2.
\]
Writing $q_j(x)=x^TA_jx$ and
$\eta(x)=(q_1(x),q_2(x),q_3(x))$, define
\[
\Phi\colon \M_3(\C)\to \M_2(\C),\qquad
\Phi(X)=3\Tr(X)I_2+\sum_j\Tr(A_jX)\sigma_j.
\]
For $z=u+iv$, the Pauli relations give
\[
\lambda_{\min}\Phi(zz^*)
=
3\|z\|^2-\|\eta(u)+\eta(v)\|
\geqslant
3\|z\|^2-\sqrt{g(u)}-\sqrt{g(v)}
\geqslant0,
\]
where the last inequality uses $g(y)\leqslant9\|y\|^4$. Positive semidefinite
matrices are sums of rank-one matrices, so $\Phi$ is positive and thus decomposable by
the decomposability theorem in~\cite{Woronowicz}.   Evaluating a
decomposable Kraus representation \eqref{eq:decomposable-kraus} at $xx^T$
and stacking its rows produces a matrix $W(x)$, linear in $x$, such that
\begin{equation}\label{eq:quartic-gram}
W(x)^*W(x)
=
\Phi(xx^T)
=3\|x\|^2I_2+Q(x).
\end{equation}
After appending zero rows, let $W$ have $N\geqslant2$ rows and put
\[
L_a(t,x)=
\begin{pmatrix}
tI_N&W(x)\\
W(x)^*&tI_2
\end{pmatrix}.
\]
For $t>0$, the Schur complement and \eqref{eq:quartic-gram} give
\begin{align*}
L_a(t,x)\geqslant0
&\Longleftrightarrow
t^2I_2-W(x)^*W(x)\geqslant0
\\
&\Longleftrightarrow
(t^2-3\|x\|^2)I_2-Q(x)\geqslant0
\\
&\Longleftrightarrow
t^2\geqslant3\|x\|^2+\sqrt{g(x)}.
\end{align*}
The last inequality says that $t$ is at least the largest root of
$p_a(\cdot,x)$. For $t<0$ the pencil cannot be positive semidefinite, while
for $t=0$ it is positive semidefinite only if $W(x)=0$, equivalently $x=0$,
because $\Tr(W(x)^*W(x))=6\|x\|^2$. Hence
\[
\Lambda_+(p_a,e)
=
\{(t,x)\mid L_a(t,x)\geqslant0\}.\qedhere
\]
\end{proof}

\section{The case of five variables}\label{sec:five-variables}

We will now prove the main result of this paper, spectrahedrality of cubic hyperbolicity cones in $\R^5.$
Our strategy is to reduce to cubics with sufficiently many spherical contact points, construct quaternionic determinantal representations, turn them into Clifford realizations, and then use closedness and
convexity to recover the whole body $H_4$.

Throughout this section
\[
 \mathcal P=P_3(\mathbb R^4),\qquad \dim\mathcal P=20,
\]
and, for $q\in H_4$, we write
\[
 Z(q):=\{x\in S^3\mid q(x)=1\}.
\]
The points of $Z(q)$ are called the contacts of $q$.  At every
$x\in Z(q)$, homogeneity and maximality on the sphere give
\begin{equation}\label{eq:contact-gradient}
 \nabla q(x)=3x.
\end{equation}
Call a five-tuple $X=(v_a)_{a=1}^5\in(S^3)^5$ \emph{poised} if every
four of the vectors $v_a$ are linearly independent.  A cubic
$q\in H_4$ is a \emph{five-contact cubic} if $Z(q)$ contains a poised
five-tuple; additional contacts are allowed.

\subsection{Denseness of five-contact cubics}

The use of sphere contacts and their derivative constraints to describe
faces of a cubic norm ball has already been used in \cite{HildebrandIvanova} in a lower-dimensional case. 

\begin{lemma}\label{lem:poised}
Let $X=(v_a)_{a=1}^5$ be poised.  Then the gradient evaluation map
\[
 J_X\colon\mathcal P\to(\mathbb R^4)^5,
 \qquad J_X(r)=(\nabla r(v_a))_{a=1}^5,
\]
is an isomorphism.  Consequently, if $X\subseteq Z(q)$ for some $q\in H_4$,
then, for all $\lambda_a>0$, the functional
\[
 \ell(r)=\sum_{a=1}^5\lambda_a r(v_a)
\]
exposes $q$ in $H_4$.
\end{lemma}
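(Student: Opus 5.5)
The plan is to prove injectivity of $J_X$ by a direct coordinate computation, and then to get the exposure statement from \eqref{eq:contact-gradient} together with injectivity.

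\textbf{Reduction to a standard five-tuple.} Both $\mathcal P$ and $(\R^4)^5$ have dimension $20$, so it suffices to show that $J_X$ is injective: if $r\in\mathcal P$ has $\nabla r(v_a)=0$ for all $a$, then $r=0$. This vanishing condition is invariant under two operations.
\begin{itemize}
\item Invertible linear changes of variables: for $r_A=r\circ A$ we have $\nabla r_A(w)=A^T\nabla r(Aw)$.
\item Rescaling each point by a nonzero scalar: $\nabla r$ is homogeneous of degree $2$.
\end{itemize}
Since $v_1,\dots,v_4$ are linearly independent, a linear change of variables sends them to $e_1,\dots,e_4$. Then $v_5=\sum_i c_ie_i$. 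Poisedness forces every $c_i\ne0$, since otherwise $v_5$ together with three of the $e_j$ would be dependent. A further diagonal change of variables, followed by rescaling the points, reduces us to $v_a=e_a$ for $a\leqslant4$ and $v_5=(1,1,1,1)$.

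\textbf{Solving the linear conditions.} We have $\partial_j r(e_i)$ equal to $3$ times the coefficient of $x_i^3$ when $j=i$, and equal to the coefficient of $x_i^2x_j$ when $j\ne i$. So the conditions at $e_1,\dots,e_4$ kill every monomial except the squarefree ones, and
\[
 r=\sum_{i<j<k}a_{ijk}x_ix_jx_k .
\]
Next, $\partial_l r(1,1,1,1)$ is the sum of the $a_{ijk}$ over the three triples containing $l$. Index the four triples by their missing index. The condition at $v_5$ then reads $(\mathbf 1\mathbf 1^T-I)a=0$. The matrix $\mathbf 1\mathbf 1^T-I$ has eigenvalues $3$ and $-1$, so it is invertible. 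Hence $a=0$, so $r=0$ and $J_X$ is an isomorphism.

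\textbf{Exposure.} Let $r\in H_4$. Then $r(v_a)\leqslant1$ for each $a$, so $\ell(r)\leqslant\sum_a\lambda_a$, and equality holds at $q$ because $X\subseteq Z(q)$. Since every $\lambda_a>0$, equality for $r$ forces $r(v_a)=1$ for all $a$, that is, $X\subseteq Z(r)$. By \eqref{eq:contact-gradient} we then have $\nabla r(v_a)=3v_a=\nabla q(v_a)$ for all $a$, so $J_X(r-q)=0$ and injectivity gives $r=q$. Thus $q$ is the unique maximizer of $\ell$ on $H_4$, i.e.\ $\ell$ exposes $q$.

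The only real work is the injectivity step, and the key choice there is normalizing $v_5$ to $(1,1,1,1)$. That normalization turns the remaining conditions into the invertible matrix $\mathbf 1\mathbf 1^T-I$.
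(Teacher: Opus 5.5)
Your proof is correct and follows the paper's argument: you normalize four contacts to $e_1,\dots,e_4$, identify the kernel of those gradient conditions with the squarefree cubics, show that the gradient map at the fifth point is invertible on that four-dimensional space, and deduce exposure from \eqref{eq:contact-gradient} plus injectivity. The only difference is your extra diagonal rescaling to $v_5=(1,1,1,1)$, which replaces the paper's determinant $-3(u_1u_2u_3u_4)^2$ with the spectrum of $\mathbf 1\mathbf 1^T-I$.
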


\begin{proof}
After an invertible linear change of coordinates, take
\[
 v_a=e_a\quad(1\leqslant a\leqslant4),
 \qquad v_5=u=(u_1,u_2,u_3,u_4).
\]
Poisedness says precisely that every $u_i$ is nonzero.  The cubics satisfying
$\nabla r(e_i)=0$ for $1\leqslant i\leqslant4$ are exactly
\[
 r(x)=\sum_{i=1}^4d_i\prod_{j\ne i}x_j.
\]
On this four-dimensional kernel, the remaining map
$r\mapsto\nabla r(u)$ has determinant
\[
 -3(u_1u_2u_3u_4)^2\neq 0.
\]
It is therefore invertible.  Thus $J_X$ is injective, hence an isomorphism
because its domain and codomain both have dimension twenty.

Now every $r\in H_4$ satisfies
$\ell(r)\leqslant\sum_a\lambda_a=\ell(q)$.  If equality holds, then
$r(v_a)=1$ for every $a$ because all $\lambda_a$ are positive.  Hence
$\nabla r(v_a)=3v_a=\nabla q(v_a)$ by \eqref{eq:contact-gradient}, so
$J_X(r-q)=0$.  The injectivity of $J_X$ gives $r=q$, proving that $\ell$
exposes $q$.
\end{proof}

\begin{lemma}\label{lem:generic-contacts}
There is a dense set of functionals 
\(c\in\mathcal P^*\) whose unique maximizer \(q_c\) from $H_4$ has five distinct
contacts \((v_a)_{a=1}^5\) such that every four vectors
\[
 p_a=(1,v_a)\in\mathbb R^5
\]
are linearly independent.
\end{lemma}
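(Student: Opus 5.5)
The plan is a dimension count in the space of functionals $c\in\mathcal P^*\cong\R^{20}$. I will show that the set of \emph{bad} functionals is a semialgebraic set of dimension at most $19$. Its complement is then open and dense, and no measure theory is needed.

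\textbf{Step 1: uniqueness of the maximizer.} The support function $c\mapsto\max_{H_4}c$ is convex, so it is differentiable off a null semialgebraic set, which therefore has dimension $\leqslant19$. Off this set the maximizer $q_c$ is unique.

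\textbf{Step 2: writing $c$ through contacts of $q_c$.} The polar body $H_4^\circ$ is $\operatorname{conv}\{\pm\operatorname{ev}_x\mid x\in S^3\}$, and $\operatorname{ev}_{-x}=-\operatorname{ev}_x$. Hence every $c$ in the normal cone of $H_4$ at $q_c$ is a positive combination of evaluations at contacts:
\[
c=\sum_{a=1}^k\lambda_a\operatorname{ev}_{v_a},\qquad v_a\in Z(q_c),\ \lambda_a>0,\ k\leqslant20,
\]
where the bound on $k$ is Carathéodory's theorem for cones in $\R^{20}$.

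\textbf{Step 3: the incidence variety.} For each $k\leqslant20$ consider the semialgebraic incidence set
\[
\mathcal I_k=\{(q,v_1,\ldots,v_k,\lambda)\mid q\in H_4,\ v_a\in S^3\text{ distinct},\ q(v_a)=1,\ \nabla q(v_a)=3v_a,\ \lambda\in\R_{>0}^k\},
\]
with its projection $(q,v,\lambda)\mapsto\sum_a\lambda_a\operatorname{ev}_{v_a}$. For fixed distinct $v_a$, the contact conditions are linear in $q$. They are the value $q(v_a)=1$ together with the three tangential components of $\nabla q(v_a)$; by Euler's identity the radial component is then automatic. This is four conditions per point. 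When they are independent, which by \Cref{lem:poised} holds at least for poised five-tuples, the fibre over $(v_a)$ has dimension $20-4k$. Adding $3k$ for the points and $k$ for $\lambda$ gives $\dim\mathcal I_k\leqslant20$.

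\textbf{Step 4: the bad strata.} The bad functionals come from three kinds of strata.
\begin{itemize}
\item[(i)] $k\leqslant4$. Then the image lies in $\{\sum_{a\leqslant4}\lambda_a\operatorname{ev}_{v_a}\}$, which has dimension at most $4\cdot3+4=16$.
\item[(ii)] Configurations in which no five of the support points have every four $p_a=(1,v_a)$ linearly independent. This is an extra closed algebraic condition on $(v_a)$, namely the vanishing of suitable $4\times4$ minors of the $p_a$ for every choice of five points. It drops the parameter dimension by at least one, as long as the fibre dimension in $q$ does not jump.
\item[(iii)] Strata on which the linear contact conditions are dependent, so the fibre dimension in $q$ does jump.
\end{itemize}
Strata of type (iii) need separate treatment. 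There the fibre grows by the corank, but the point configuration lies in a proper subvariety of $(S^3)^k$. I would bound its codimension below by the corank, stratifying by corank and using the explicit determinant computation in \Cref{lem:poised} as the model. With this, every bad stratum projects to a set of dimension $\leqslant19$. A finite union of such semialgebraic images, together with the non-uniqueness locus from Step 1, is nowhere dense, and this gives the lemma.

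\textbf{Main obstacle.} The hard part is the codimension estimate in the degenerate cases (ii) and (iii). I must show that affine dependence among the $p_a$ is not compensated by extra freedom in $q$. For example, a whole $2$-sphere $S^3\cap\{\langle w,v\rangle=s\}$ of contacts is a priori dangerous. There the restriction of cubics to the $2$-sphere has a $16$-dimensional span of evaluations, and adding $4$ parameters for the hyperplane gives exactly $20$. For such configurations I would first try showing directly that a $q\in H_4$ with that many contacts on an affine $2$-sphere is forced into a family of dimension small enough that the count drops again.
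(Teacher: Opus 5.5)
\textbf{There is a genuine gap.} Your Steps 1, 2 and 4(i) are fine and agree with the paper: almost-everywhere uniqueness of the maximizer, the normal cone generated by $\operatorname{ev}_x$ for $x\in Z(q)$, and the bound $4\cdot3+4=16$ for at most four support points. However, the codimension estimate for strata (ii) and (iii), which you flag as the main obstacle, is the whole content of the lemma, and you do not supply it.

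Step 3 only makes sense for $k\leqslant5$. For $k\geqslant6$, the $4k$ linear contact conditions on the $20$-dimensional space $\mathcal P$ can never be independent, so every configuration with six or more support points falls into stratum (iii). There you only say you would bound the codimension by the corank, and \Cref{lem:poised} does not control this. The fibre in $q$ really does jump on bad configurations, e.g.\ as soon as four support points lie on a common circle. Counting on $\mathcal I_k$ is also a detour: the functional $\sum_a\lambda_a\operatorname{ev}_{v_a}$ does not depend on $q$. The $q$-fibre adds nothing to the image and only creates the jumping problem.

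\textbf{The missing idea is a rigidity statement that classifies bad contact sets.}
\begin{enumerate}
\item \emph{Circle lemma.} Suppose four contacts $v_a$ have linearly dependent lifts $(1,v_a)$. Then the $v_a$ are affinely dependent, so they lie on a circle $C=S^3\cap A$ with $A$ an affine $2$-plane. Under a rational parametrization of $C$, the function $(1+s^2)^3(1-q)$ becomes a nonnegative polynomial of degree at most six with four double zeros. It therefore vanishes, and $C\subseteq Z(q)$.
\item \emph{Classification.} A finite bad contact set has at most four points. An infinite bad contact set is contained in $C\cup\{y\}$ for one circle $C$ and one further point $y$. Indeed, two contacts off $C$ together with three generic points of $C$ already have four-wise independent lifts.
\item \emph{Counting.} Count directly in $\mathcal P^*$. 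The space $E_C=\operatorname{span}\{\operatorname{ev}_x\mid x\in C\}$ has dimension at most $7$, because restrictions of cubics to $C$ become polynomials of degree at most six after clearing denominators. The circle case therefore contributes at most $6+3+7+1=17$ parameters: $6$ for the circle, $3$ for $y$, $7$ for an element of $E_C$, and $1$ for the weight on $\operatorname{ev}_y$. Together with the finite case ($\leqslant16$), all bad functionals lie in a semialgebraic set of dimension at most $17<20$.
\end{enumerate}

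This also removes your worrying example. A $2$-sphere of contacts is not bad at all, since five generic points on an affine $2$-sphere have four-wise independent lifts. Configurations with dependent contact conditions matter only when they are bad, and the circle lemma shows the bad ones are very thin.
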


\begin{proof}
For \(x\in S^3\), let \(\delta_x(r)=r(x)\).  Since
\[
 H_4=\{r\in\mathcal P\mid \delta_x(r)\leqslant1
                         \text{ for every }x\in S^3\},
\]
support-function and polar duality
\cite{Rockafellar} identify the functionals maximized
at \(q\) with the cone generated by the active constraints:
\[
 \operatorname{cone}\{\delta_x\mid x\in Z(q)\}.
\]
We will call a contact set bad if it contains no five points whose lifts to $\R^5$ are
four-wise independent.  Four contacts with dependent lifts lie on a circle
\(C\subseteq S^3\).  Under a rational parametrization of \(C\), clearing
denominators turns \(1-q\) into a polynomial of degree at most six with four
double zeros.  Hence \(C\subseteq Z(q)\).

Consequently a finite bad contact set has at most four points.  If a bad
contact set is infinite and its lifts span more than three dimensions,
choose independent lifts \(p_1,\ldots,p_4\).  Every other lift lies in at
least one of the spaces
\[
 L_a=\operatorname{span}\{p_b\mid b\ne a\},\qquad 1\leqslant a\leqslant4;
\]
otherwise the five lifts would be four-wise independent.  Some \(L_a\)
contains infinitely many contact lifts.  The slice
\[
 L_a\cap(\{1\}\times\mathbb R^4)=\{1\}\times A_a
\]
defines an affine plane \(A_a\subseteq\mathbb R^4\).  It meets \(S^3\) in a
circle \(C\), since the intersection of an affine two-plane with a sphere is
empty, a point, or a circle, and here it is infinite.  The preceding argument,
applied to four of these contacts, gives \(C\subseteq Z(q)\).  There
cannot be two contacts \(y,z\notin C\): if \(P\) is the span of the lifted
circle and \(Y=\operatorname{span}(p_y,p_z)\), then
\(\dim(P\cap Y)\leqslant1\), since otherwise \(Y\subseteq P\) and hence
\(y,z\in C\).  Thus three generic circle lifts together with
\(p_y,p_z\) are four-wise independent.  If all lifts span at most three
dimensions, the contact set is itself contained in a circle. 
We have thus shown that every
bad contact set has at most four points or is contained in
\(C\cup\{y\}\).

The functionals maximized at bad points therefore lie in a semialgebraic
set of dimension at most \(17<20=\dim\mathcal P^*\).  Indeed, the spaces \(E_C\) vary semialgebraically with \(C\). The finite case has at most
\(4\cdot3+4=16\) parameters.  Circles form a
six-dimensional family, and for fixed \(C\) the space
\[
 E_C=\operatorname{span}\{\delta_x\mid x\in C\}
\]
has dimension at most seven: after rationally parametrizing \(C\) and
clearing denominators, its restrictions are polynomials of degree at most
six.  The circle case therefore has
dimension at most
\[
 6+3+(7+1)=17.
\]
Almost every functional on the compact convex set \(H_4\) has a unique
maximizer, by almost-everywhere differentiability of its support function
\cite{Rockafellar}.   A lower-dimensional
semialgebraic set has empty interior.  Removing the bad functionals
therefore leaves a dense set with the required property.
\end{proof}

\begin{proposition}\label{prop:density-five}
The five-contact cubics are dense in
$\operatorname{Ext}(H_4)$.
\end{proposition}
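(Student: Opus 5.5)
The plan is to combine Straszewicz's theorem with a continuity argument for maximizers, fed by the dense set of functionals from \Cref{lem:generic-contacts}. By Straszewicz's theorem~\cite{Rockafellar}, the exposed points of the compact convex set $H_4$ are dense in $\operatorname{Ext}(H_4)$. It therefore suffices to approximate every exposed point $q$ by five-contact cubics.

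Fix an exposed point $q$, exposed by some $c_0\in\mathcal P^*$. By \Cref{lem:generic-contacts}, choose functionals $c_n\to c_0$ from the dense set there. Each $c_n$ has a unique maximizer $q_n:=q_{c_n}$ with the stated contact property. By compactness of $H_4$, every subsequence of $(q_n)$ has a convergent sub-subsequence, say with limit $q'$. Since the support function $h_{H_4}$ is continuous, passing to the limit in $c_n(q_n)=h_{H_4}(c_n)$ gives $c_0(q')=h_{H_4}(c_0)$. Hence $q'$ maximizes $c_0$, and since $c_0$ exposes $q$, we get $q'=q$. So $q_n\to q$, and it remains to show that the $q_n$ are five-contact cubics, possibly after shrinking the dense set.

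This is where I expect the main obstacle. \Cref{lem:generic-contacts} gives five contacts $v_a$ whose lifts $(1,v_a)$ are four-wise linearly independent, that is, any four contacts are affinely independent. Poisedness instead requires every four $v_a$ to be linearly independent in $\R^4$, and the two conditions differ: four unit vectors may lie on a great $2$-sphere $S^3\cap W$, with $W$ a hyperplane through the origin, and still be affinely independent. I would close this gap with a second dimension count, in the spirit of the proof of \Cref{lem:generic-contacts}. I would show that the functionals whose unique maximizer has a contact set containing no poised five-tuple form a semialgebraic set of dimension below $20$.

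For that count I would argue as follows.
\begin{enumerate}
\item Call the contact set \emph{linearly bad} if every choice of five contacts with affinely independent four-subsets has some four of them lying in a common hyperplane $W$.
\item If $W$ contains infinitely many contacts, restrict $q$ to $W\cong\R^3$. This restriction is a cubic in $H_3$ with infinitely many contacts on $S^2$. Using the same rational-parametrization argument as in \Cref{lem:generic-contacts}, one should confine these contacts to circles, which reduces to the circle case already counted there.
\item If $W$ contains only finitely many contacts, the relevant functionals lie in the cone spanned by $\delta_x$ for at most a bounded number of points $x$, subject to the nontrivial algebraic condition that four of them are linearly dependent. This costs at least one parameter relative to the $16$-parameter finite count, so it stays below $20$.
\item In the mixed case, with four contacts in $W$ together with further contacts, one uses the lifted four-wise independence to select a different five-tuple avoiding the dependency, or else bounds the dimension as above.
\end{enumerate}

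Removing this additional lower-dimensional set still leaves a dense set of functionals, because a lower-dimensional semialgebraic set has empty interior. Running the approximation argument with this smaller dense set yields $q_n$ that are five-contact cubics converging to $q$. Together with Straszewicz's theorem, this proves density in $\operatorname{Ext}(H_4)$.
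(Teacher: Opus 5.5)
Your reduction is sound and matches the paper's: Straszewicz's theorem, plus convergence $q_{c_n}\to q$ of unique maximizers when $c_n\to c_0$ and $c_0$ exposes $q$. You also correctly notice that \Cref{lem:generic-contacts} only gives four-wise independence of the lifts $p_a=(1,v_a)$, which is weaker than poisedness.

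\textbf{The gap is the second dimension count.} You do not carry it out, and the sketch fails as written.

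In step~3 you measure against the lemma's $16$-parameter count. That count is for contact sets with at most four points. A contact set that is good in the lemma's sense has at least five contacts. For $k$ contacts, the naive count $3k+k$ is already $20$ at $k=5$ and exceeds $20$ for $k\geqslant6$. You cannot ignore $k>5$, because the normal cone is generated by all contacts of the maximizer. Subtracting one parameter for a linear dependency therefore handles only $k=5$.

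For larger $k$ you would have to bound the configurations of $k$ common contacts of a cubic in $H_4$ with no poised five-subset. At non-poised configurations \Cref{lem:poised} gives no control. If $v_1,\dots,v_4$ lie in $\{\ell=0\}$, every cubic $\ell s$ with $s$ quadratic, $s(v_1)=\dots=s(v_4)=0$ and $\nabla(\ell s)(v_5)=0$ lies in $\ker J_X$. This imposes $8$ linear conditions on a $10$-dimensional space, so $\dim\ker J_X\geqslant2$.

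Step~4 is only an assertion. Step~2 does not follow from the rational-parametrization argument either. That argument shows only that four contacts on one circle force the whole circle into $Z(q)$. It neither confines an arbitrary infinite contact set in $S^3\cap W$ to circles, nor covers configurations outside the lemma's case $C\cup\{y\}$. An example is a great circle plus two further contacts: any five of these points include three on the circle, so no five-tuple is poised.

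\textbf{The paper needs no second count; it changes the hyperbolicity direction.} By \eqref{eq:contact-gradient} the $p_a$ are singular points of $h_q$. For $e'$ near $e$ in the open cone, renormalizing $h_q$ with respect to $e'$ gives $q_{e'}\in H_4$ depending continuously on $e'$, with $q_{e'}\to q$. The $p_a$ then yield contacts $v_a(e')$ of $q_{e'}$. These arise by projecting along $e'$ followed by invertible maps. Hence four of them are linearly dependent if and only if $e'\in\operatorname{span}\{p_a\mid a\in I\}$. This span is a hyperplane, because the four lifts are independent. Choosing $e'\to e$ off these five hyperplanes gives poised five-contact cubics converging to $q$. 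This is exactly the step that converts the lifted independence from the lemma into poisedness.
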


\begin{proof}
Take a functional from Lemma~\ref{lem:generic-contacts}, with unique
maximizer $q$ and contacts $(v_a)_{a=1}^5$.  By
\eqref{eq:contact-gradient}, the lifts $p_a=(1,v_a)$ are singular points of
$h_q$.  For an interior direction $e'$ close to $e=(1,0)$, rescaled so that
$h_q(e')=1$, hyperbolicity with respect to $e'$ follows from the
invariance of hyperbolicity directions inside the cone
\cite{BauschkeGulerLewisSendov}.  Put $h_q$ into normal form relative to $e'$:
\[
 h_q\sim t^3-3\|y\|^2t+2q_{e'}(y),\qquad q_{e'}\in H_4,
\]
The construction may be chosen continuously, so $q_{e'}\to q$, and the
points $p_a$ give contacts $v_a(e')\in Z(q_{e'})$.
For every four-element set $I\subseteq\{1,\ldots,5\}$,
\[
 \{v_a(e')\mid a\in I\}\text{ is linearly dependent}
 \quad\Leftrightarrow\quad
 e'\in\operatorname{span}\{p_a\mid a\in I\}.
\]
Indeed, changing direction projects along $e'$, and the remaining coordinate
changes are invertible.  The five spans on the right are proper hyperplanes
by Lemma~\ref{lem:generic-contacts}; choosing $e'\to e$ outside their union
makes the new contacts poised.  Thus every generic maximizer is a limit of
five-contact cubics.

The maximizer varies continuously at every functional having a unique
maximizer.  Hence the preceding argument and Lemma~\ref{lem:generic-contacts}
give density among exposed points.  The density of exposed points among
extreme points~\cite{Rockafellar} then gives density
in $\operatorname{Ext}(H_4)$.
\end{proof}

\subsection{Clifford realizations for five-contact cubics}

We use standard quaternionic notation.  Write
$\operatorname{Herm}_r(\mathbb H)$ for the quaternionic Hermitian matrices
and $\det_M$ for the Moore determinant; see~\cite{Aslaksen} for background.
For the spectral theorem and congruence identities, we also use the
account in~\cite[expanded version, Appendix~A]{ScottSokal}.
The Cauchy--Binet formula needed below is the restricted Moore-determinant
formula with real diagonal weights. Sylvester's identity in this setting
follows by comparing the nonzero eigenvalues of $WW^*$ and $W^*W$.  On
$\operatorname{Herm}_2(\mathbb H)$,
\[
 \det_M\begin{pmatrix}\alpha&\bar z\\z&\beta\end{pmatrix}
 =\alpha\beta-|z|^2
\]
has signature $(1,5)$, and every
$B\in\operatorname{Herm}_2(\mathbb H)$ with $\tr B=0$ satisfies
$B^2=\frac12\tr(B^2)I_2$.  Inner products on quaternionic vector spaces
are understood through their real parts.

The multiaffine stable-polynomial part of the next proof uses reciprocal
duality and the one-positive-eigenvalue description of stable quadratics,
as developed in~\cite{ChoeOxleySokalWagner}.  The proof includes these
steps explicitly before constructing the quaternionic pencil.

\begin{proposition}
\label{prop:five-quaternionic}
Every five-contact cubic $q\in H_4$ admits a definite quaternionic
Hermitian determinantal representation
\[
 h_q(t,x)=\det_M(tI_3+A(x)),\qquad
 A\colon\mathbb R^4\to\operatorname{Herm}_3(\mathbb H)
 \text{ linear}.
\]
\end{proposition}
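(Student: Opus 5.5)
The plan is to exploit the five real singular points of $h_q$ to bring $h_q$ into multiaffine form, dualize to a quadratic, represent that quadratic by a $2\times2$ quaternionic Moore determinant, and dualize back to a $3\times3$ one.

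\emph{Multiaffine coordinates.} Let $(v_a)_{a=1}^5\subseteq Z(q)$ be poised and $p_a=(1,v_a)\in\R^5$. By \eqref{eq:contact-gradient}, $\partial_t h_q(p_a)=3(1-\|v_a\|^2)=0$ and $\nabla_x h_q(p_a)=-6v_a+2\nabla q(v_a)=0$, so every $p_a$ is a singular point of $h_q$. I first check that the $p_a$ form a basis of $\R^5$. A linear relation $\sum_a\mu_ap_a=0$ gives $\sum_a\mu_a=0$ and $\sum_a\mu_av_a=0$. Poisedness forces all $\mu_a\ne0$, and evaluating $\ell(r)=\sum_a\mu_a r(v_a)$ against $J_X$ from Lemma~\ref{lem:poised} should give a contradiction; otherwise I perturb as in Proposition~\ref{prop:density-five}.

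Let $y_1,\dots,y_5$ be the coordinates dual to the basis $(p_a)$, so that $p_a$ are the coordinate points. A cubic singular at all coordinate points contains no monomials $y_a^3$ or $y_a^2y_b$. Hence
\[
h_q(y)=\sum_{a<b<c}\kappa_{abc}\,y_ay_by_c
\]
is multiaffine. Write $e=\sum_a\lambda_ap_a$. I will show that all $\lambda_a\ne0$, and that after flipping signs of the coordinates all $\lambda_a>0$. Then $h_q$ is hyperbolic with respect to a point of the open positive orthant. Since the orthant avoids the hyperplanes $y_a=0$, I expect to conclude that, up to a global sign, $h_q$ is real stable with nonnegative coefficients $\kappa_{abc}\geqslant0$. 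This is where the multiaffine theory of~\cite{ChoeOxleySokalWagner} enters.

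\emph{Dualize.} Consider the reciprocal
\[
f(w)=w_1\cdots w_5\,h_q(1/w)=\sum\kappa_{abc}\,w_dw_e,
\]
where $\{d,e\}$ is the complement of $\{a,b,c\}$. Then $f$ is a multiaffine quadratic, again stable with nonnegative coefficients. By the one-positive-eigenvalue description, its symmetric matrix has signature $(1,k)$ with $k\leqslant4$. The target is a matrix $W\in\mathbb H^{5\times2}$ with
\[
f(w)=\det_M\bigl(W^*\operatorname{diag}(w)W\bigr).
\]
By the Cauchy--Binet formula with real diagonal weights, this amounts to
\[
\kappa_{abc}=\bigl|\det_M W_{\{d,e\}}\bigr|^2
\]
for the $2\times2$ minors, together with vanishing diagonal terms, which is automatic for multiaffine $f$. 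The idea is to realize $f$ as the pullback of the form $\det_M$ on $\operatorname{Herm}_2(\mathbb H)$, which has signature $(1,5)$. Take $B_a=W_a^*W_a$, the rank-one Hermitian matrices given by the rows $W_a$ of $W$. These are null vectors of $\det_M$, which matches $f$ having zero diagonal, and the polarization values $\langle B_d,B_e\rangle$ should equal $\kappa_{abc}/2$ up to normalization. Thus I need to embed the Gram data of $f$, namely five null vectors of a Lorentzian space with prescribed nonnegative pairwise products, isometrically into $\R^{1,5}$. This is possible because the signature $(1,k)$ with $k\leqslant4$ fits inside $(1,5)$ and the null vectors can be chosen future-pointing.

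\emph{Dualize back.} Choose $U\in\mathbb H^{5\times3}$ whose columns span the orthogonal complement of the column space of $W$, normalized to be isometric. Complementary-minor identities, which follow from Sylvester's identity for $\operatorname{diag}(y)^{1/2}$ combined with $WW^*+UU^*=I$, give
\[
\bigl|\det_M U_{\{a,b,c\}}\bigr|^2=c\,\bigl|\det_M W_{\{d,e\}}\bigr|^2
\]
for a constant $c>0$. Hence, up to a positive scalar,
\[
h_q(y)=\det_M\bigl(U^*\operatorname{diag}(y)U\bigr).
\]
Substituting $y_a$ as linear forms in $(t,x)$ gives a quaternionic Hermitian pencil $M(t,x)$ with $M(e)=U^*\operatorname{diag}(\lambda)U\succ0$. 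Congruence by $M(e)^{-1/2}$ normalizes $M(e)$ to $I_3$, and the constant is fixed by $h_q(e)=1$, giving $M=tI_3+A(x)$.

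The main obstacle will be the signs. I must show that $e$ lies in an orthant in which $h_q$ is stable with $\kappa_{abc}\geqslant0$, and that the resulting Lorentzian null-vector configuration is future-pointing so that it embeds into $\operatorname{Herm}_2(\mathbb H)$ with all rank-one $B_a\geqslant0$. I would first try to deduce these facts from the fact that the singular points $p_a$ lie on the boundary of the hyperbolicity cone, together with the fact that $e$ is interior.
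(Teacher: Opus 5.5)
Your treatment of the case of independent lifts follows the paper's route: multiaffine coordinates, a stable reciprocal quadratic with one positive eigenvalue, null vectors in $\operatorname{Herm}_2(\mathbb H)$, complementary minors, and Cauchy--Binet. However, two steps fail.

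\emph{The lifts need not be independent.} The lifts $p_a=(1,v_a)$ can be linearly dependent, and Lemma~\ref{lem:poised} gives no contradiction. Take $\lambda(x)=\tfrac32x_1$ and $2q=3\lambda\|x\|^2-\lambda^3$. Then $h_q=(t-\lambda(x))\bigl(t^2+\lambda(x)t+\lambda(x)^2-3\|x\|^2\bigr)$ and $\|q\|_\infty=1$. The contact set $Z(q)=S^3\cap\{x_1=\tfrac23\}$ is a $2$-sphere, and five points on it with no four on a common circle are poised. Yet all their lifts lie in the hyperplane $t=\lambda(x)$. Perturbing $e$ as in Proposition~\ref{prop:density-five} does not help, because the $p_a$ are singular points of $h_q$ and do not depend on the direction. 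This case needs a separate argument, as in the paper. On the hyperplane spanned by the $p_a$, in the coordinates $p_1,\dots,p_4$, the cubic is $\sum_ac_a\prod_{b\neq a}t_b$. The gradient equations at $p_5$ force all $c_a=0$, so $h_q$ factors as above for some linear form $\lambda$. Then $A(x)=-\lambda(x)\oplus\left(\begin{smallmatrix}\lambda(x)/2&\beta(x)\\ \overline{\beta(x)}&\lambda(x)/2\end{smallmatrix}\right)$ with $|\beta(x)|^2=3\|x\|^2-\tfrac34\lambda(x)^2$ is the required pencil.

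\emph{The sign step goes the wrong way.} Poisedness does give $\lambda_a\neq0$, but $e$ need not lie in the cone spanned by the $p_a$. Renormalize $E_3(y)=\sum_{a<b<c}y_ay_by_c$ at the direction $(1,1,1,1,-\varepsilon)$, which lies in its open hyperbolicity cone. This gives a five-contact cubic whose lifts are positive multiples of the coordinate vectors, with your coefficient $\lambda_5<0$. Flipping $y_5$ yields an orthant on which $E_3$ changes sign, so the flipped polynomial is not stable. In general, hyperbolicity at a single point of an orthant does not give stability.

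The positivity must come from the $p_a$ themselves. Since $h_q(t,v_a)=(t-1)^2(t+2)$, each $p_a$ lies in $\Lambda_+(h_q,e)$. By convexity the unflipped orthant $\{\sum_ay_ap_a\mid y\geqslant0\}$ lies in the cone, and its interior lies in the open cone. This yields stability of the multiaffine form, and $\kappa_{abc}\geqslant0$ by restricting to coordinate $3$-planes, with no flips at all.

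Accordingly, $M(e)=U^*\operatorname{diag}(\lambda)U$ is not visibly positive. Instead, $M(\sum_ap_a)$ is a positive multiple of $U^*U=I_3$. Since $\det_M M=h_q$ has no zeros on the connected open cone, constancy of inertia gives $M(e)\succ0$.

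A minor point: $\det_M$ is only defined on Hermitian matrices, so your minor condition should read $\kappa_{abc}=\det_M(W_{\{d,e\}}^*W_{\{d,e\}})$.
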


\begin{proof}
Let $(v_a)_{a=1}^5$ be the poised contacts, and put
$p_a=(1,v_a)\in\mathbb R^5$.  Every four of the $p_a$ are independent
because the corresponding four $v_a$ are.

Suppose first that all five $p_a$ are independent.  Let
$\Phi\colon\mathbb R^5\to\mathbb R^5$ be given by $\Phi e_a=p_a$ and put
$f(t)=h_q(\Phi t)$.  Singularity at the coordinate points makes $f$
multiaffine, so $f(t)=\sum_{|S|=3}a_S\prod_{a\in S}t_a$.
The positive orthant is carried into the hyperbolicity cone of $h_q$, and its interior into the interior.
Thus $f$ is hyperbolic in every positive direction, hence
real stable, and is nonnegative on the positive orthant.  Restricting to
each coordinate three-plane gives $a_S\geqslant0$.  Consider the reciprocal
quadratic
\[
 g(t)=-\Bigl(\prod_{a=1}^5t_a\Bigr)
 f(-t_1^{-1},\ldots,-t_5^{-1})
 =\sum_{a<b}\beta_{ab}t_at_b,
\]
which is real stable because $z\mapsto-1/z$ preserves the upper half-plane.
Moreover,
\[
 \beta_{ab}=a_{\{1,\ldots,5\}\setminus\{a,b\}}\geqslant0.
\]
Write $g(t)=t^T\mathsf Bt$ for a real symmetric
matrix $\mathsf B$.  If
$v^T\mathsf B\mathbf1=0$, real-rootedness of
$g(v+s\mathbf1)=v^T\mathsf Bv+s^2g(\mathbf1)$ forces
$v^T\mathsf Bv\leqslant0$.  Since
$\gamma:=g(\mathbf1)>0$, the matrix $\mathsf B$ has exactly one positive
eigenvalue.  Since the Moore determinant on
$\operatorname{Herm}_2(\mathbb H)$ has signature $(1,5)$ and $g$ has
zero diagonal, Sylvester's law gives determinant null matrices
$X_a\in\operatorname{Herm}_2(\mathbb H)$ such that
\[
 g(t)=\det_M\Bigl(\sum_at_aX_a\Bigr).
\]
Denote by
\[
 \langle X,Y\rangle_L
 :=\frac12\bigl(\det_M(X+Y)-\det_M X-\det_M Y\bigr)
\]
the Lorentz bilinear form on $\operatorname{Herm}_2(\mathbb H)$.
Polarizing the representation of $g$ gives
$\langle X_a,X_b\rangle_L=\mathsf B_{ab}$.  Thus, putting
$S=\sum_aX_a$, the nonnegative coefficients of $g$ imply
\[
 \langle S,X_a\rangle_L
 =\sum_b\mathsf B_{ba}
 =\frac12\sum_{b\ne a}\beta_{ab}\geqslant0,
\]
where $\beta_{ba}=\beta_{ab}$ for $a<b$.
Since $\det_M S=\gamma>0$, replacing every
$X_a$ by $-X_a$ if necessary makes $S$ positive definite.  The nullity
of $X_a$ and the preceding inequality then imply $X_a\geqslant0$.
After congruence by $S^{-1/2}$, we obtain
\[
 Y_a=d_ad_a^*,\qquad \sum_aY_a=I_2,\qquad
 \det_M\Bigl(\sum_at_aY_a\Bigr)=\frac{g(t)}\gamma.
\]
Let $W=(d_1\ \cdots\ d_5)\in\operatorname{Mat}_{2,5}(\mathbb H)$, so
$WW^*=I_2$.  Put $E=W^*W$ and $K_0=I_5-E$, and choose
$V\in\operatorname{Mat}_{3,5}(\mathbb H)$ with
$V^*V=K_0$ and $VV^*=I_3$.  If $S_0\subset\{1,\ldots,5\}$ is a triple
with complementary pair $S_0^c=\{a,b\}$, the quaternionic singular-value
decomposition gives
\[
 \begin{aligned}
 \det_M(V_{S_0}V_{S_0}^*)
 &=\det_M(V_{S_0}^*V_{S_0})
  =\det_M((K_0)_{S_0,S_0})\\
 &=\det_M(I_3-W_{S_0}^*W_{S_0})
  =\det_M(I_2-W_{S_0}W_{S_0}^*)\\
 &=\det_M(W_{S_0^c}W_{S_0^c}^*)
  =\frac{\beta_{ab}}\gamma.
 \end{aligned}
\]
The restricted Moore Cauchy--Binet identity
\cite[expanded version, Proposition~A.3(g)]{ScottSokal} therefore yields
\[
 f(t)=\det_M\left(
 \gamma^{1/3}V\operatorname{diag}(t_1,\ldots,t_5)V^*\right).
\]
The pulled-back pencil $L$ satisfies $\det_M L=h_q$ and
$L(\Phi\mathbf1)=\gamma^{1/3}I_3\succ0$.  Since $e=(1,0)$ and
$\Phi\mathbf1$ lie in the same open hyperbolicity cone, constancy of
inertia gives $L(e)\succ0$.  As $\det_M L(e)=h_q(e)=1$, congruence by
$L(e)^{-1/2}$ gives the required representation $tI_3+A(x)$ without
changing its determinant.

Suppose instead that the five $p_a$ are dependent.  Their span is a
hyperplane.  Using $p_1,\ldots,p_4$ as coordinates on it and writing
$p_5=u$, singularity at the coordinate points makes the restriction
$F(t)=\sum_{a=1}^4c_a\prod_{b\ne a}t_b$.
Every $u_i$ is nonzero, since every four $p_a$ are independent.
The gradient equations at $u$, after multiplication by
$u_i/(u_1u_2u_3u_4)$, show that all $c_a/u_a$ are equal and hence zero.
Thus
$h_q$ is divisible by the equation of this hyperplane.  Its $t$-coefficient
is nonzero because the $v_a$ span $\mathbb R^4$, and comparison of
coefficients gives a linear form $\lambda$ such that
\[
 h_q(t,x)=(t-\lambda(x))
 \bigl(t^2+\lambda(x)t+\lambda(x)^2-3\|x\|^2\bigr).
\]
Hyperbolicity of the quadratic factor gives
$\delta(x):=3\|x\|^2-\frac34\lambda(x)^2\geqslant0$.
Choose a real-linear map
$\beta\colon\mathbb R^4\to\mathbb H$ with
$|\beta(x)|^2=\delta(x)$, using $\dim_{\mathbb R}\mathbb H=4$, and set
\[
 A(x)=-\lambda(x)\ \oplus\
 \begin{pmatrix}
  \lambda(x)/2&\beta(x)\\
  \overline{\beta(x)}&\lambda(x)/2
 \end{pmatrix}.
\]
Then
\[
 \det_M(tI_3+A(x))
 =(t-\lambda(x))
 \left(\left(t+\frac12\lambda(x)\right)^2-\delta(x)\right)
 =h_q(t,x).\qedhere
\]
\end{proof}

\begin{remark}
Replacing the quaternionic entries by their complex or real matrix
representations turns the Moore representation above into a definite
complex Hermitian determinantal
representation of $h_q^2$ of size $6$, and a definite real symmetric
representation of $h_q^4$ of size $12$.
Both pencils describe the hyperbolicity cone of $h_q$, so its
spectrahedrality already follows at this stage.  The Clifford realization
constructed below is needed for the subsequent convexity argument.
\end{remark}

We now start turning the Moore determinantal representation into a Clifford realization. The following is the most important technical ingredient.

\begin{proposition}
\label{prop:contact-data}
Let $q\in H_4$ satisfy $q(e_1)=1$, and suppose that $h_q$ admits a
quaternionic Hermitian determinantal representation
\[
 h_q(t,x)=\det_M(tI_3+A(x)),\qquad
 A\colon\mathbb R^4\to\operatorname{Herm}_3(\mathbb H)
 \text{ linear}.
\]
Put $U=e_1^\perp$.  Since $q$ attains its maximum on the unit sphere
at $e_1$, its derivative in every direction of $U$ vanishes there.
Thus the term quadratic in $s$ and linear in $y$ is zero, and we may write
\[
 q(s,y)=s^3+3sy^*Dy+r(y),
 \qquad s\in\mathbb R,\quad y\in U,
\]
where $D=D^*$ and $r$ is cubic.  Let $T$ be the canonical symmetric
pencil of $r$, and put
\[
 P=\frac{I+D}{2},\qquad Q=\frac{I-D}{2}.
\]
Then $Q\geqslant I/4$, and there are a finite-dimensional real Hilbert
space $F$, a unit vector $\xi\in F$, a linear map $a\colon U\to F$, and
a linear map $J\colon U\to\operatorname{Hom}(U,F)$ such that
\[
 \begin{gathered}
 a^*\xi=0,\qquad J(y)^*\xi=Dy,\qquad a^*a=P,\\
 a^*J(y)+J(y)^*a=T(y),\qquad
 J(y)^*J(y)=\|y\|^2Q.
 \end{gathered}
\]
\end{proposition}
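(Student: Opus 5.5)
The inequality $Q\geqslant I/4$ comes first and uses only the contact at $e_1$. Since $q(s,y)\leqslant(s^2+\|y\|^2)^{3/2}$, I would fix $y$, put $s=1$, scale $y\mapsto\varepsilon y$, and compare second-order terms: $1+3\varepsilon^2y^*Dy+O(\varepsilon^3)\leqslant 1+\tfrac32\varepsilon^2\|y\|^2+O(\varepsilon^4)$. This gives $D\leqslant I/2$, hence $Q=(I-D)/2\geqslant I/4$. The same expansion with $s=-1$, combined with $q\geqslant-\|x\|^3$, gives nothing new. So $P\geqslant 0$ has to come out of the construction itself, via $P=a^*a$.

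For the structural part I would normalize the pencil at the contact. Since $h_q(t,e_1)=t^3-3t+2=(t-1)^2(t+2)$, the matrix $A(e_1)$ has eigenvalues $2,-1,-1$. By the quaternionic spectral theorem we may conjugate by a quaternionic unitary, which does not change $\det_M$, so that $A(e_1)=\operatorname{diag}(2,-1,-1)$. Then I write
\[
A(s,y)=s\operatorname{diag}(2,-1,-1)+\begin{pmatrix}\alpha(y)&b(y)^*\\ b(y)&C(y)\end{pmatrix},
\]
with $\alpha$ real-linear, $b\colon U\to\mathbb H^2$, and $C\colon U\to\operatorname{Herm}_2(\mathbb H)$. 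Vanishing of the $t^2$-coefficient gives $\alpha+\tr C=0$.

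Next I split $C(y)=-\tfrac12\alpha(y)I_2+B(y)$ with $\tr B=0$, so that $B(y)^2=\tfrac12\tr(B(y)^2)I_2$. I expand $\det_M(tI_3+A(s,y))$ as a polynomial in $t$ and $s$ using the Schur-complement formula for the $(1,1)$ entry, which is valid for Moore determinants here because that entry is real. I then match coefficients against $t^3-3(s^2+\|y\|^2)t+2(s^3+3sy^*Dy+r(y))$. I expect the following identifications.
\begin{itemize}
\item The $s^2$-linear-in-$y$ terms vanish automatically; this is the contact condition.
\item The $st$ and $s$ terms express $\|b(y)\|^2$ and $\alpha(y)^2$ together with $\tr B(y)^2$ through $y^*Dy$ and $\|y\|^2$.
\item The $s^0$ terms express $r(y)$ as a cubic built from $\alpha$, $b^*Bb$ and $\det_M C$.
\end{itemize}
The real Hilbert space $F$ should then be assembled from these data. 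The natural candidate is $F=\mathbb R\oplus\mathbb H^2$ (real dimension $9$), possibly enlarged by a further copy of $\mathbb R$ or $\mathbb H$ to absorb a positive remainder. In it I would take $\xi$ as the unit vector in the $\mathbb R$ summand, $a(y)$ essentially a multiple of $b(y)$ (in the $\mathbb H^2$ part, so $a^*\xi=0$), and $J(y)z$ built from $B(y)b(z)$, $\alpha(y)b(z)$, and a $\xi$-component equal to $\langle Dy,z\rangle$ (so that $J(y)^*\xi=Dy$).

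With these choices, $a^*a=P$ should be exactly the $st$-coefficient identity. $J(y)^*J(y)=\|y\|^2Q$ should follow from the Clifford-type identity $B(y)^2=\tfrac12\tr(B(y)^2)I_2$ combined with the $s$-coefficient identity; this is where the normalization $\tfrac12$ in $P$ and $Q$ should come from. The cross relation $a^*J(y)+J(y)^*a=T(y)$ should be the polarized form of the $s^0$ identity for $r$, since $z^*T(y)z$ is the polarization of $r$.

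The main obstacle is making $J(y)^*J(y)$ come out as the scalar multiple $\|y\|^2Q$ rather than merely some quadratic expression in $y$. Matching coefficients only determines traces and norms. The operator identity needs a genuinely Clifford/Lorentzian input: $B(y)$ anticommutes with $B(y')$ for orthogonal $y,y'$, up to the correction coming from $\alpha$. If the naive $F$ fails, I would first try to add an auxiliary summand carrying $\bigl(\|y\|^2Q-\text{(computed part)}\bigr)^{1/2}$, in the spirit of the defect operator $D$ in Theorem~\ref{t:uniform}. I would check that this summand is orthogonal to $\xi$ and to the range of $a$, so that the other three relations are unaffected. Positivity of that defect is again where $Q\geqslant I/4$ and the hyperbolicity of $h_q$ would be used.
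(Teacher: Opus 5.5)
Your second-order argument for $Q\geqslant I/4$ is correct. It is a neat alternative to the paper's route that does not use the representation; the paper instead reads the bound off from $G=I-2D\geqslant0$, where $B(y)^2=(y^*Gy)I_2$. Your normalization $A(e_1)=\operatorname{diag}(2,-1,-1)$ also matches the paper. In fact $\alpha=0$: the $s$-linear part of $\tfrac12\tr(A(s,y)^2)=3(s^2+\|y\|^2)$ gives $2\alpha-\tr C=0$.

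The gap is in the construction of $J$, at two points. First, the cross relation fails if the $\mathbb H^2$-component of $J(y)u$ is built only from $B(y)b(u)$. The relation $a^*a=P$ dictates $a(u)=\tfrac12b(u)$, since $\operatorname{Re}(b(u)^*b(v))=4u^*Pv$. Your choice then makes $a^*J(y)+J(y)^*a$ a multiple of $\operatorname{Re}(b(u)^*B(y)b(v))$. This is symmetric in $u,v$ but not in $y,u$; the discrepancy is $\operatorname{Re}(\mathcal A(y,u)^*b(v))$ with $\mathcal A(y,u)=B(y)b(u)-B(u)b(y)$. It does not vanish in general. Take $B(y)=\sqrt g\bigl(\begin{smallmatrix}y_1&y_2-y_3\mathbf i\\ y_2+y_3\mathbf i&-y_1\end{smallmatrix}\bigr)$ and $b(u)=\sqrt{3-g}\,(u_1+u_2\mathbf j+u_3\mathbf k,0)^T$ with $0<g<3$. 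Inserted into $A(s,y)=\bigl(\begin{smallmatrix}2s&b(y)^*\\ b(y)&-sI_2+B(y)\end{smallmatrix}\bigr)$, this satisfies all hypotheses. For $y,u$ the first two coordinate vectors of $U$ one gets $\operatorname{Re}(\mathcal A(y,u)^*b(u))=\sqrt g\,(3-g)\neq0$. Summands orthogonal to the range of $a$ cannot repair this. Polarizing $r(y)=\tfrac12b(y)^*B(y)b(y)$ in all three slots needs the term $B(u)b(y)$; the paper uses $L(y,u)=\tfrac16B(y)b(u)+\tfrac13B(u)b(y)$.

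Second, and more seriously, your fallback for $J(y)^*J(y)=\|y\|^2Q$ does not give a linear $J$. Let $R(y)$ be $\|y\|^2Q$ minus the Gram operator of the components already built. Then $R(-y)=R(y)$, so $y\mapsto R(y)^{1/2}$ is even and hence not linear. In Theorem~\ref{t:uniform} the defect was a constant operator, which is why the square root worked there. What you actually need is to write the biquadratic form $u^*R(y)u$ as a sum of squares of bilinear forms. Nonnegativity alone does not give this: Choi's nonnegative biquadratic form in $3+3$ variables is not such a sum.

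The missing idea is the following. With the correct $L$, the defect is
$\|y\|^2u^*Qu-(y^*Du)^2-\|L(y,u)\|^2=\tfrac1{18}\|\mathcal A(y,u)\|^2+(y^*Dy)(u^*Du)-(y^*Du)^2$,
and this depends only on $y\wedge u$. It is therefore $w^*Kw$ for a symmetric $3\times3$ matrix $K$ in the Pl\"ucker coordinates $w(y,u)$. The extra summand $K^{1/2}w(y,u)$ in $F=\R\oplus\mathbb H^2\oplus\R^3$ is then bilinear. Since $\dim U=3$, every element of $\Lambda^2U$ is decomposable, so $K\geqslant0$ amounts to pointwise nonnegativity.

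That nonnegativity still needs proof, because $D$ may be indefinite and the Gram term can be negative. The paper rotates to an orthonormal pair $u_1,u_2$ with $u_1^*Gu_2=0$ and puts $g_i=u_i^*Gu_i$. It then applies Cauchy--Schwarz to $\operatorname{Re}\bigl((B(u_1)b(u_2))^*B(u_2)b(u_1)\bigr)$, using $\|B(u_1)b(u_2)\|^2=g_1(3-g_2)$ and $\|B(u_2)b(u_1)\|^2=g_2(3-g_1)$. This bounds $36\,w^*Kw$ below by a perfect square. Neither $Q\geqslant I/4$ nor hyperbolicity alone supplies this step.
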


\begin{proof}
Comparing coefficients in $\det_M(tI_3+A(x))=h_q(t,x)$ gives
\[
 \tr A(x)=0,\qquad
 \frac12\tr(A(x)^2)=3\|x\|^2,
 \qquad \det_M A(x)=2q(x).
\]
Since $h_q(t,e_1)=(t-1)^2(t+2)$, a quaternionic unitary conjugation
lets us assume $A(e_1)=\operatorname{diag}(2,-1,-1)$.
For $y\in U$, let $\alpha(y)$ and $B(y)$ be the diagonal blocks of
$A(0,y)$.  Taking the trace of $A(0,y)$ and comparing the coefficients
of $s$ in $\frac12\tr(A(s,y)^2)=3(s^2+\|y\|^2)$ give, respectively,
$$\alpha(y)+\tr B(y)=2\alpha(y)-\tr B(y)=0.$$
Thus $\alpha=0$, $\tr B=0$, and, for a linear map $z\colon U\to\mathbb H^2$,
\[
 A(s,y)=
 \begin{pmatrix}
  2s&z(y)^*\\
  z(y)&-sI_2+B(y)
 \end{pmatrix},
 \qquad y\in U.
\]
Since $B(y)\in\operatorname{Herm}_2(\mathbb H)$ is traceless, there is a
self-adjoint $G\geqslant0$ on $U$ such that
$B(y)^2=(y^*Gy)I_2$.  Taking the trace of the square and expanding the
Moore determinant by the lower-right Schur complement give the following
identities.  The inverse simplifies because $B(y)^2$ is scalar, and
continuity covers singular blocks:
\[
 \|z(y)\|^2+y^*Gy=3\|y\|^2,
 \qquad
 q(s,y)=s^3+\frac{s}{2}
 \bigl(\|z(y)\|^2-2y^*Gy\bigr)
 +\frac12z(y)^*B(y)z(y).
\]
Comparing this with the prescribed form of $q$ and extracting mixed terms gives
\begin{equation}\label{eq:defect-identities}
 \begin{aligned}
 G&=I-2D,\\
 B(y)B(u)+B(u)B(y)&=2(y^*Gu)I_2,\\
 \operatorname{Re}\bigl(z(u)^*z(v)\bigr)
   &=u^*(3I-G)v,\\
 r(y)&=\frac12z(y)^*B(y)z(y).
 \end{aligned}
\end{equation}
The third identity gives $G\leqslant3I$.  Thus
$Q=(I+G)/4\geqslant I/4$ and $P=(3I-G)/4$.
Define the real-bilinear maps $L,\mathcal A\colon U\times U\to\mathbb H^2$ by
\[
 \begin{aligned}
 L(y,u)&=\frac16B(y)z(u)+\frac13B(u)z(y),\\
 \mathcal A(y,u)&=B(y)z(u)-B(u)z(y).
 \end{aligned}
\]
In orthonormal coordinates on $U$, put
\[
 w(y,u)=(y_1u_2-y_2u_1,\ y_1u_3-y_3u_1,\ y_2u_3-y_3u_2)^T.
\]
Coordinate expansion, using the alternation of $\mathcal A$, shows that
the right-hand side below is quadratic in these three minors appearing in $w$.  Define
the real symmetric matrix $K$ by
\begin{equation}\label{eq:K-coordinate}
 w(y,u)^*Kw(y,u)=\frac1{18}\|\mathcal A(y,u)\|^2
 +(y^*Dy)(u^*Du)-(y^*Du)^2.
\end{equation}
Every unit vector in $\mathbb R^3$ is $w(u_1,u_2)$ for an orthonormal
pair.  A plane rotation preserves this vector and makes $u_1^*Gu_2=0$.
Put $g_i=u_i^*Gu_i\in[0,3]$ and
\[
 X=B(u_1)z(u_2),\qquad Y=B(u_2)z(u_1),\qquad
 \alpha=\sqrt{(3-g_1)(3-g_2)},\quad \beta=2\sqrt{g_1g_2}.
\]
By \eqref{eq:defect-identities},
$\|X\|\|Y\|=\alpha\beta/2$.  Expanding
\eqref{eq:K-coordinate} and applying Cauchy--Schwarz therefore gives
\[
 \begin{aligned}
 36\,w(u_1,u_2)^*Kw(u_1,u_2)
 &=\alpha^2+\beta^2-4\operatorname{Re}(X^*Y)\\
 &\geqslant\alpha^2+\beta^2-2\alpha\beta
 =(\alpha-\beta)^2\geqslant0.
 \end{aligned}
\]
Thus $K\geqslant0$.  Take the real Hilbert space
$F=\mathbb R\oplus\mathbb H^2\oplus\mathbb R^3$, where the quaternionic
summand carries the real inner product
$\langle p,q\rangle=\operatorname{Re}(p^*q)$.
The adjoints of $a$ and $J(y)$ below are taken with respect to these real
Hilbert space structures.  Set $\xi=(1,0,0)$ and define
\[
 \begin{aligned}
 a(u)&=(0,\tfrac12z(u),0),\\
 J(y)u&=(y^*Du,L(y,u),K^{1/2}w(y,u)).
 \end{aligned}
\]
The identities $a^*\xi=0$ and $J(y)^*\xi=Dy$ are immediate, while
$a^*a=P$ follows from
$\operatorname{Re}(z(u)^*z(v))=4u^*Pv$ in \eqref{eq:defect-identities}.
Extracting mixed terms from $r$ gives
\[
 \frac12\operatorname{Re}\bigl(z(u)^*L(y,v)
 +L(y,u)^*z(v)\bigr)
 =u^*T(y)v.
\]
Hence $a^*J(y)+J(y)^*a=T(y)$.  Finally, cancellation of the mixed terms
and \eqref{eq:defect-identities} give
\[
 \begin{aligned}
 \|L(y,u)\|^2+\frac1{18}\|\mathcal A(y,u)\|^2
 &=\frac1{12}\|B(y)z(u)\|^2+\frac16\|B(u)z(y)\|^2\\
 &=\|y\|^2u^*Qu-(y^*Dy)(u^*Du).
 \end{aligned}
\]
Together with \eqref{eq:K-coordinate}, this gives
$\|J(y)u\|^2=\|y\|^2u^*Qu$ for every $u$, hence
$J(y)^*J(y)=\|y\|^2Q$.
\end{proof}

\begin{lemma}\label{lem:three-generator}
Let $Y,E$ be three-dimensional real Hilbert spaces, let $F$ be a
finite-dimensional real Hilbert space, and let
$J\colon Y\to\operatorname{Hom}(E,F)$ be linear with
$J(y)^*J(y)=\|y\|^2I_E$.  After complexification there are Hilbert spaces
$K_-,K_+$, isometries $\iota_-\colon E_{\mathbb C}\to K_-$ and
$\iota_+\colon F_{\mathbb C}\to K_+$, and a real-linear map
$C\colon Y\to\operatorname{Hom}(K_-,K_+)$ such that
\[
 \begin{aligned}
 C(y)^*C(y)&=\|y\|^2I_{K_-},&
 C(y)C(y)^*&=\|y\|^2I_{K_+},\\
 C(y)\iota_-u&=\iota_+J(y)u.
 \end{aligned}
\]
\end{lemma}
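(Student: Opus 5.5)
The plan is to read $J$ as a representation of a Clifford-type structure and to dilate it to a genuine two-sided one. Fix an orthonormal basis $y_1,y_2,y_3$ of $Y$ and put $J_i=J(y_i)$. Polarizing $J(y)^*J(y)=\|y\|^2I_E$ gives
\[
 J_i^*J_i=I_E,\qquad J_i^*J_j+J_j^*J_i=0\quad(i\ne j).
\]
Consider the block operators on $E_{\mathbb C}\oplus F_{\mathbb C}$
\[
 \Gamma_i=\begin{pmatrix}0&J_i^*\\ J_i&0\end{pmatrix}.
\]
These are self-adjoint, but they are only partial Clifford generators: $\Gamma_i\Gamma_j+\Gamma_j\Gamma_i$ equals $2\delta_{ij}$ on $E$, while on $F$ it equals $J_iJ_j^*+J_jJ_i^*$, and that block is merely a contraction-type expression. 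The goal is therefore to dilate $\Gamma(y)=\sum_i y_i\Gamma_i$ to a graded Clifford representation. Concretely, I want odd self-adjoint operators $\tilde\Gamma_i$ on $K_-\oplus K_+$ with $\tilde\Gamma_i\tilde\Gamma_j+\tilde\Gamma_j\tilde\Gamma_i=2\delta_{ij}$, grading-compatible isometries $\iota_\pm$, and the property $\tilde\Gamma_i\iota_-u=\iota_+J_iu$ \emph{exactly}, not merely after compression. The off-diagonal block of $\tilde\Gamma(y)$ is then the desired $C(y)$. Anticommutation plus the Clifford relation gives $C(y)^*C(y)=\|y\|^2$ and $C(y)C(y)^*=\|y\|^2$.

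The key observation is that the defining condition only constrains $C$ on $\iota_-E$. For each unit $y$, $J(y)$ is an isometry $E\to F$, so the requirement is that the isometries $J(y)$ extend to unitaries $C(y)\colon K_-\to K_+$, depending linearly on $y$ and satisfying the anticommutation $C(y)^*C(y')+C(y')^*C(y)=2\langle y,y'\rangle$.

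I would do this in two steps.

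\textbf{Step 1: quaternionic structure on $E$.} Normalize by $U_i=J_1^*J_i$ for $i=2,3$; these satisfy $U_i^*=-U_i$ only after compression, which is the crux. Indeed $J_1^*J_i$ is skew on $E$ (from the polarized identity) and lies in the unit ball. This suggests working with the three operators $J_i$ and the operator system spanned by the $2\times2$ blocks $\Gamma_i$. Its positivity is encoded as follows: the map sending the generators of the (complex) Clifford algebra $C_3$ to $\Gamma_i$ compressed appropriately is positive on the spin operator system $S_3$. Since $S_3$ arising from Pauli matrices with grading lives in $\mathrm{M}_2\oplus\mathrm{M}_2$, low dimensionality ($\dim Y=3$) is what should make positive imply completely positive here. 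This is analogous to the use of Woronowicz in \Cref{c:lowdim}.

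\textbf{Step 2: Stinespring dilation with exact intertwining.} Arnveson/Stinespring, via Theorem~\ref{t:ucp}'s proof pattern, gives a representation $\pi$ of the graded Clifford algebra and an isometry $V$ with $V^*\pi(c_i)V=\Gamma_i$. Since $\|\Gamma_i\iota_-u\|=\|u\|=\|\pi(c_i)V\iota_-u\|$, compression equality on $E$ upgrades to $\pi(c_i)V u=V\Gamma_iu$ for $u\in E$: the norms match, and a unitary compressing to an isometry must map into the range. I would finish by splitting $\pi$ along the grading $\pi(\omega)$ with $\omega$ the volume element. One must make sure the even/odd parts carry $E$ and $F$, for instance by tensoring with a $\mathbb Z_2$-graded doubling $K\otimes\mathbb C^2$ and taking $\tilde\Gamma_i=\pi(c_i)\otimes\sigma_1$.

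The main obstacle is the complete positivity in Step 1. Only the $E$-block relations are exact, and the $F$-block is uncontrolled, so one must show that the unital map from the $3$-generator spin system to $\mathrm{M}(E\oplus F)$, with $c_i\mapsto\Gamma_i$, is completely positive. Here I would exploit that three anticommuting self-adjoint unitaries generate $\mathrm{M}_2\oplus\mathrm{M}_2$, and that $S_3\cong\mathrm{M}_2$ with unitary odd part. I would first try to write down the dilation explicitly instead: take $K_-=E_{\mathbb C}\otimes\mathbb C^2$, $K_+=F_{\mathbb C}\otimes\mathbb C^2$ plus a defect space, and define $C(y)$ using the quaternion units $J_1^*J_i$. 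If that fails, I would fall back on the general positive-to-CP argument for spin systems with $m=3$.
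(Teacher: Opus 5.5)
\textbf{Genuine gap: Step~1 is the whole lemma, and it is left unproved.}

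Your Step~2 is sound. If a ucp map $\Theta\colon C_3\to B(E_\C\oplus F_\C)$ with $\Theta(c_i)=\Gamma_i$ exists, then $\|V^*\pi(c(y))Vu\|=\|J(y)u\|=\|y\|\|u\|=\|\pi(c(y))Vu\|$ forces $\pi(c(y))Vu\in\operatorname{ran}V$, which gives exact intertwining. You do not even need the grading: $K_-=K_+=K$ and $C(y)=\pi(c(y))$ already work.

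The problem is that Step~1 is a restatement of the lemma, not a step toward it. Conversely, given $C$ as in the statement, $\tilde\Gamma(y)=\begin{pmatrix}0&C(y)\\ C(y)^*&0\end{pmatrix}$ on $K_+\oplus K_-$ is a Clifford representation. Moreover, $W(u,f)=(\iota_+f,\iota_-u)$ is an isometry with $W^*\tilde\Gamma(y)W=\Gamma(y)$. So all the content of the lemma sits in the complete positivity you leave open.

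The reasons you give for that complete positivity do not hold up:
\begin{itemize}
\item Since $C_3\cong\M_2(\C)\oplus\M_2(\C)$ with $c_i\mapsto(\sigma_i,-\sigma_i)$ and $-\sigma_i=\sigma_2\sigma_i^T\sigma_2$, a unital map on $\operatorname{span}\{1,c_1,c_2,c_3\}$ extends to a ucp map on $C_3$ exactly when the corresponding map on $\M_2(\C)$ is decomposable.
\item Positivity of your map uses only $\|J(y)\|\leqslant\|y\|$.
\item Positive maps $\M_2(\C)\to\M_n(\C)$ are automatically decomposable only for $n\leqslant3$. Woronowicz's non-decomposable positive maps $\M_2(\C)\to\M_4(\C)$ show there is no ``general positive-to-CP argument for spin systems with $m=3$''.
\item Your target has dimension $3+\dim F\geqslant6$, with no upper bound.
\end{itemize}
So the exact identity $J(y)^*J(y)=\|y\|^2I_E$ must enter essentially, and your proposal does not say how.

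\textbf{The missing idea: discard $F$ and work only with the Gram data on $E$.} This is how the paper argues; only the products $J_i^*J_j$ matter. Suppose you find operators $C_i^0$ with $C^0(y)^*C^0(y)=C^0(y)C^0(y)^*=\|y\|^2I$ and an isometry $V$ on $E_\C$ with $V^*(C_i^0)^*C_j^0V=J_i^*J_j$. Then $\sum_iC_i^0Vu_i\mapsto\sum_iJ_iu_i$ is a well-defined isometry. After padding it extends to a unitary $U_0$, and $C_i=U_0C_i^0$ satisfies everything.

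Take $C_1^0=I$, $C_2^0=i\sigma_3\otimes I$, $C_3^0=-i\sigma_2\otimes I$. The problem then reduces to complete positivity of the single map $\omega\colon\M_2(\C)\to\operatorname{End}(E_\C)$ given by $\omega(I)=I$ and $\omega(\sigma_k)=-iA_k$. Here $\omega$ is built from all three cyclic products $A_1=J_2^*J_3$, $A_2=J_3^*J_1$, $A_3=J_1^*J_2$ jointly. Your sketch uses only $J_1^*J_2$ and $J_1^*J_3$ as separate skew contractions, and $J_2^*J_3$ is not determined by them.

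The paper verifies complete positivity of $\omega$ in two steps. It first puts the axial vectors of the $A_k$ into an orientation-preserving normal form. The Choi matrix of $\omega$ is then congruent to $\tfrac12H\oplus\tfrac12H$, where $H$ is a principal $3\times3$ block of the positive semidefinite Gram matrix $[J_i^*J_j]$.

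Even at this $3\times3$ level, a Woronowicz-type argument would give only decomposability. That is not enough here, because the transposed summand realizes the quaternion relations with the wrong orientation. What is really used is positivity of a Gram matrix, not positivity of a map.
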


\begin{proof}
Choose an orthonormal basis $e_1,e_2,e_3$ of $Y$, put $J_i=J(e_i)$, and
set $A_1=J_2^*J_3$, $A_2=J_3^*J_1$, $A_3=J_1^*J_2$.  The $A_i$ are easily seen to be skew-adjoint, so 
each $A_i$ is represented by a real skew-symmetric $3\times3$ matrix.
A singular-value decomposition of their three independent
coefficients, using orientation-preserving orthogonal changes of basis
in $E$ and $Y$, gives orthonormal bases in which the following normal
form holds.  We denote the new basis of $E$ by $f_1,f_2,f_3$, reuse
$e_1,e_2,e_3$ for the new basis of $Y$, and redefine $J_i=J(e_i)$ and
the cyclic products $A_i$ accordingly:
\[
 A_1=\begin{pmatrix}0&0&0\\0&0&-a\\0&a&0\end{pmatrix},\qquad
 A_2=\begin{pmatrix}0&0&b\\0&0&0\\-b&0&0\end{pmatrix},\qquad
 A_3=\begin{pmatrix}0&-c&0\\c&0&0\\0&0&0\end{pmatrix}.
\]
Here $a,b,c\in\mathbb R$ may have either sign, so both changes of basis
can be chosen orientation-preserving.
For the Pauli matrices define, using these new $A_i$,
\[
 \omega\colon\operatorname{Mat}_2(\mathbb C)\to
 \operatorname{End}(E_{\mathbb C}),\qquad
 \omega(I_2)=I_E,\quad \omega(\sigma_k)=-iA_k.
\]
The positive Gram matrix $[J_i^*J_j]$ then has the principal block
\[
 H=\begin{pmatrix}1&-c&-b\\-c&1&-a\\-b&-a&1\end{pmatrix}\geqslant0.
\]
In these bases the Choi matrix of $\omega$ is unitarily congruent to
$\frac12H\oplus\frac12H$, so the complete-positivity criterion in~\cite{Choi} makes
$\omega$ completely positive.

By the dilation theorem for completely positive maps~\cite{Stinespring}, write
$\omega(X)=V^*(X\otimes I_{\mathcal L})V$, where $\mathcal L$ is
finite-dimensional and
$V\colon E_{\mathbb C}\to K^0:=\mathbb C^2\otimes\mathcal L$
is an isometry.  Define
\[
 C_1^0=I_2\otimes I,\qquad
 C_2^0=i\sigma_3\otimes I,\qquad
 C_3^0=-i\sigma_2\otimes I.
\]
The definitions give $V^*(C_i^0)^*C_j^0V=J_i^*J_j$.
Hence the correspondence $\sum_iC_i^0Vu_i\mapsto\sum_iJ_iu_i$ is a
well-defined surjective isometry from
$S:=\sum_iC_i^0V(E_{\mathbb C})\subseteq K^0$ onto
$T:=\sum_iJ_i(E_{\mathbb C})\subseteq F_{\mathbb C}$.
Enlarge $\mathcal L$ if necessary, extending $V$ by zero and using the
same Pauli formulas for $C_i^0$ on the added summands, so that
$\dim K^0\geqslant\dim F_{\mathbb C}$.  Choose a complex Hilbert space
$L$ of dimension $\dim K^0-\dim F_{\mathbb C}$ and identify $T$ with
$T\oplus\{0\}$ in $F_{\mathbb C}\oplus L$.
The orthogonal complements of $S$ and $T\oplus\{0\}$ now have equal
dimensions, so the isometry extends to a unitary
\[
 U_0\colon K^0\to F_{\mathbb C}\oplus L.
\]
Set $K_-=K^0$, $K_+=F_{\mathbb C}\oplus L$, $C_i=U_0C_i^0$,
$\iota_-=V$, and let $\iota_+$ be the natural inclusion.  Then
$C(y)=\sum_i\langle y,e_i\rangle C_i$ has the required properties.
\end{proof}

We now obtain the required Clifford realization.

\begin{proposition}
\label{prop:five-clifford}
Every five-contact cubic in $H_4$ belongs to $R_4$.
\end{proposition}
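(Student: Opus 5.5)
The plan is to feed the quaternionic representation of Proposition~\ref{prop:five-quaternionic} through Proposition~\ref{prop:contact-data} and Lemma~\ref{lem:three-generator}, and then write down a Stinespring-type realization by hand.

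Let $q$ be a five-contact cubic with poised contacts $v_a$. Since $R_4$ is $O(4)$-invariant (Proposition~\ref{p:clgeom}) and the five-contact property is preserved by rotations, we may rotate so that $v_1=e_1$. Thus $q(e_1)=1$. By Proposition~\ref{prop:five-quaternionic}, $h_q=\det_M(tI_3+A(x))$, so Proposition~\ref{prop:contact-data} applies. Writing $x=(s,y)$ with $y\in U=e_1^\perp\cong\R^3$, it yields
\[
q(s,y)=s^3+3sy^*Dy+r(y),
\]
together with $F,\xi,a,J$ satisfying the listed identities and $Q\geqslant I/4$.

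Since $Q$ is invertible, put $\tilde J(y)=J(y)Q^{-1/2}$. Then $\tilde J(y)^*\tilde J(y)=\|y\|^2I_U$, so Lemma~\ref{lem:three-generator} applies with $Y=E=U$. It gives isometries $\iota_\pm$ and a real-linear map $C(y)\colon K_-\to K_+$ satisfying
\[
C(y)^*C(y)=\|y\|^2I,\qquad C(y)C(y)^*=\|y\|^2I,\qquad C(y)\iota_-u=\iota_+\tilde J(y)u.
\]
On $K=K_+\oplus K_-$ define
\[
\Gamma(s,y)=\begin{pmatrix} sI&C(y)\\ C(y)^*&-sI\end{pmatrix}.
\]
Then $\Gamma(x)^2=\|x\|^2I$. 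Polarizing shows that the $\Gamma(e_i)$ are self-adjoint and pairwise anticommute, so by the universal property of $C_4$ there is a $*$-representation $\pi$ with $\pi(c(x))=\Gamma(x)$.

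Next define a real-linear map and extend it complex-linearly:
\[
V(s,y)=\bigl(\iota_+(s\xi+a y),\ \iota_-(Q^{1/2}y)\bigr).
\]
Using $a^*\xi=0$, $\|\xi\|=1$ and $a^*a=P$ gives
\[
\|V(s,y)\|^2=s^2+y^*Py+y^*Qy=s^2+\|y\|^2,
\]
because $P+Q=I$; hence $V$ is an isometry. For the quadratic form, $C(y)\iota_-Q^{1/2}y=\iota_+J(y)y$. Together with $J(y)^*\xi=Dy$ and $a^*J(y)+J(y)^*a=T(y)$, this gives
\[
(Vx)^*\Gamma(x)(Vx)=s\bigl(s^2+y^*Py-y^*Qy\bigr)+2\langle s\xi+ay,J(y)y\rangle
= s^3+sy^*Dy+2sy^*Dy+y^*T(y)y,
\]
which equals $q(s,y)$ since $P-Q=D$ and $y^*T(y)y=r(y)$. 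Therefore $\Theta(b)=V^*\pi(b)V$ is ucp and is a Clifford realization of $q$, so $q\in R_4$.

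The main obstacle is expected to be bookkeeping rather than conceptual. The identities of Proposition~\ref{prop:contact-data} are stated for real adjoints, while $\Gamma$ and $\Theta$ live on complex spaces. I would first verify the computation for real $x$, where $q(x)=x^*\Theta(c(x))x$ is only required. I would then check that complexifying $V$ keeps it isometric, which holds since $V$ is a real isometry $\R^4\to K$ with real-orthogonal image structure inherited from $\iota_\pm$. I would also check that the two real cross terms combine to $2\operatorname{Re}$ of the off-diagonal pairing with the stated constants.
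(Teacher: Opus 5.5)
Your proposal is correct and follows the paper's proof essentially step by step: you rotate a contact to $e_1$, invoke Propositions~\ref{prop:five-quaternionic} and~\ref{prop:contact-data}, normalize by $Q^{-1/2}$ to apply Lemma~\ref{lem:three-generator}, and compress the representation $s\Gamma_0+\Gamma(y)$ on $K_+\oplus K_-$ along the same isometry $V(s,u)=(s\xi+a(u),\iota_-Q^{1/2}u)$. The only difference is that you verify just the quadratic form $x^*\Theta(c(x))x=q(x)$, which is all Definition~\ref{d:DCl} requires, whereas the paper checks the stronger pencil identity $V^*\pi(c(x))V=T_q(x)$.
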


\begin{proof}
By the $O(4)$-invariance of $R_4$, we may move one contact in a poised
five-tuple to $e_1$.  Proposition~\ref{prop:five-quaternionic} supplies
a representation $h_q(t,x)=\det_M(tI_3+A(x))$, so
Proposition~\ref{prop:contact-data} applies.  Use its notation and maps.
Since $Q\geqslant I/4$, the map
$\widehat J(y)=J(y)Q^{-1/2}$ satisfies the hypothesis of
Lemma~\ref{lem:three-generator}.  Apply that lemma, identify the
common codomain of the maps $J(y)$ with its image in $K_+$, and put
\[
 b=\iota_-Q^{1/2}\colon U_{\mathbb C}\to K_-.
\]
Then
\[
 b^*b=Q,
 \qquad C(y)b(u)=J(y)u.
\]
On $K_+\oplus K_-$ define
\[
 \Gamma_0=\begin{pmatrix}I&0\\0&-I\end{pmatrix},
 \qquad
 \Gamma(y)=\begin{pmatrix}0&C(y)\\C(y)^*&0\end{pmatrix}.
\]
The lemma gives $\Gamma(y)^2=\|y\|^2I$, while $\Gamma_0$ anticommutes
with every $\Gamma(y)$.  Thus $(s,y)\mapsto s\Gamma_0+\Gamma(y)$ defines
a representation $\pi\colon C_4\to\mathcal B(K_+\oplus K_-)$.  Define
\[
 V\colon\mathbb C\oplus U_{\mathbb C}\to K_+\oplus K_-,
 \qquad
 V(s,u)=(s\xi+a(u),b(u)).
\]
Using the identities of Proposition~\ref{prop:contact-data} and $P+Q=I$
gives $V^*V=I_4$ and
\[
 V^*(s\Gamma_0+\Gamma(y))V
 =\begin{pmatrix}s&(Dy)^T\\Dy&sD+T(y)\end{pmatrix}
 =T_q(s,y).
\]
Consequently $\Theta(c)=V^*\pi(c)V$ is ucp and
\[
 x^*\Theta(c(x))x=x^*T_q(x)x=q(x),
\]
so $q\in R_4$.
\end{proof}

\subsection{The five-variable result}

We can now put everything together to obtain our main result.

\begin{theorem}
\label{thm:R4H4}
Every cubic in $H_4$ is Clifford realizable:
\[
 R_4=H_4.
\]
Consequently every hyperbolicity cone of a cubic in five variables is
spectrahedral.
\end{theorem}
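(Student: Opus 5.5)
The plan is to combine the density result with the convexity and compactness of $R_4$, and then read off the cone statement from \Cref{t:ucp} together with the normal form of \Cref{p:normal}.

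\emph{Step 1: $\operatorname{Ext}(H_4)\subseteq R_4$.} By \Cref{prop:five-clifford}, every five-contact cubic lies in $R_4$. By \Cref{prop:density-five}, these cubics are dense in $\operatorname{Ext}(H_4)$. Since $R_4$ is compact by \Cref{p:clgeom}, hence closed, the closure of the set of five-contact cubics lies in $R_4$. That closure contains $\operatorname{Ext}(H_4)$.

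\emph{Step 2: $H_4\subseteq R_4$.} $H_4$ is a compact convex subset of the finite-dimensional space $\mathcal P$ by \Cref{p:Hm}. By Minkowski's (Krein--Milman) theorem, $H_4=\operatorname{conv}\operatorname{Ext}(H_4)$. Since $R_4$ is convex (\Cref{p:clgeom}) and contains $\operatorname{Ext}(H_4)$, we get $H_4\subseteq R_4$. For the reverse inclusion, \Cref{t:ucp} shows $|q(x)|\leqslant\|x\|^3$ for every $q\in R_4$, so $R_4\subseteq H_4$. Hence $R_4=H_4$.

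\emph{Step 3: the cone statement.} Let $h$ be a cubic in five variables, hyperbolic with respect to $e$; after scaling, $h(e)=1$. By \Cref{p:normal}, after a linear change of variables and quotienting the lineality directions $\ker b$, we have $h=h_q$ with $q\in H_m$ for some $m\leqslant4$. The hyperbolicity cone of $h$ is the preimage, under the linear change of variables and the quotient map, of $\Lambda_+(h_q,e)$. Spectrahedrality is preserved under invertible linear maps and under pulling back along a linear surjection: compose the pencil with the projection.

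For $m=4$, Step 2 gives $q\in R_4$. For $m\leqslant3$, \Cref{c:lowdim} handles $m=3$. For $m\leqslant2$, extend $q$ by zero in extra variables to get a cubic in $H_3=R_3$ and restrict the resulting pencil; alternatively, apply Step 2 after embedding $\R^m\subseteq\R^4$, since the extended cubic has the same sup norm and restricting a spectrahedral cone to a linear subspace stays spectrahedral. In the degenerate case $m=0$, $h=t^3$ and the cone is a half-space, hence trivially spectrahedral. In every case \Cref{t:ucp} then gives a spectrahedral representation of $\Lambda_+(h_q,e)$, and hence of the original cone.

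The only delicate point is Step 1. We need the density to hold in the topology of $\mathcal P$, and we need closedness of $R_4$; both are supplied by the earlier results. Everything else is formal convexity. I would also double-check that "extreme points" in \Cref{prop:density-five} means all of $\operatorname{Ext}(H_4)$, not only exposed points, though in finite dimensions the closure of the exposed points already suffices for the convex hull argument (Straszewicz).
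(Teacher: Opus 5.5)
Your proposal is correct and follows the paper's proof: five-contact cubics are dense in $\operatorname{Ext}(H_4)$ (\Cref{prop:density-five}) and lie in $R_4$ (\Cref{prop:five-clifford}); then closedness and convexity of $R_4$ (\Cref{p:clgeom}) and Minkowski's theorem give $R_4=H_4$, and \Cref{t:ucp} gives the cone statement. You also spell out two points the paper leaves implicit: the inclusion $R_4\subseteq H_4$, and the degenerate case $m<4$, handled via \Cref{c:lowdim} or the embedding $\R^m\subseteq\R^4$ and pulling back along the quotient map.
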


\begin{proof}
By Propositions~\ref{prop:density-five}
and~\ref{prop:five-clifford}, a dense subset of
$\operatorname{Ext}(H_4)$ lies in $R_4$.  Proposition~\ref{p:clgeom} says
that $R_4$ is compact, hence closed, and convex.  
Since a compact convex set in finite dimensions is the convex hull of
its extreme points~\cite{Rockafellar}, we obtain $R_4=H_4$.
The spectrahedrality assertion follows from
Theorem~\ref{t:ucp}.
\end{proof}

\begin{remark}
Together with \Cref{p:clgeom}, the equality $R_4=H_4$ shows that
$H_4$ is a spectrahedral shadow. This extends Hildebrand's
semidefinite representability result for $H_3$~\cite{Hildebrand}
to cubics on $\R^4$. The dimension bound is sharp: by
\Cref{t:strict},  $H_m$ for every
$m\geqslant5$ is not a spectrahedral shadow.
\end{remark}

\section{Conclusion}

We have proved spectrahedrality of hyperbolicity cones of cubics in five
variables, but a stronger representation problem remains open:
does every homogeneous cubic $h$ in five variables, hyperbolic with respect
to $e$, admit a representation
\[
 h(z)=\det_M L(z),\qquad L(e)\succ0,
\]
with $L$ a $3\times3$ quaternionic Hermitian linear pencil?
We construct such representations for five-contact cubics, whereas the
extension to all cubics uses convexity of Clifford realizability and does
not establish quaternionic determinantal representations.

It is also natural to ask how far the quaternionic approach can extend
in more variables.  If quaternionic determinantal representations could
always be converted into Clifford realizations in higher dimensions,
then their existence for every hyperbolic cubic in six variables would
imply $R_5=H_5$, contradicting \Cref{t:strict}.  But such a conversion is not proved here.  However, if this obstruction
can be established, a next direction would be to investigate octonionic
analogues and, more general, constructions based on further
Cayley--Dickson algebras.  The appropriate notions of determinant and
positivity, and their implications for hyperbolicity cones, would then
require separate study.

\section*{Acknowledgements}
I would like to thank Andreas Thom for numerous discussions on
operator-algebraic approaches to the generalized Lax conjecture. I would also like to thank Mario Kummer for his
insightful questions and comments on a previous version of this paper,
which greatly helped to improve the exposition.

\section*{Declaration of AI assistance}
\markboth{Declaration of AI assistance}{}
Generative AI tools were used to assist with drafting and revising the
text, checking mathematical arguments, streamlining proofs, and conducting
literature searches.  The author remains
responsible for the mathematical correctness and final content of the
manuscript.

\end{document}